\let\cite=\citet
\newcommand{\FIGS}{.}
\newcommand{\FCODES}{.}
\begin{document}
\newcommand\footnotemarkfromtitle[1]{%
\renewcommand{\thefootnote}{\fnsymbol{footnote}}%
\footnotemark[#1]%
\renewcommand{\thefootnote}{\arabic{footnote}}}

\title{Fast estimation from above of the maximum  \\ 
wave speed in the Riemann problem \\  
for the Euler equations\footnotemark[1]}

\author{Jean-Luc Guermond\footnotemark[2]
\and Bojan Popov\footnotemark[2]}
\date{Draft version \today}

\maketitle

\renewcommand{\thefootnote}{\fnsymbol{footnote}} \footnotetext[1]{
  This material is based upon work supported in part by the National
  Science Foundation grant DMS-1217262, by the Air
  Force Office of Scientific Research, USAF, under grant/contract
  number FA99550-12-0358, and the Army Research Office, under grant number W911NF-15-1-0517. 
Submitted to Journal of Computational Physics, November 6 2015;
Accepted, May 26 2016} \footnotetext[2]{Department of Mathematics, Texas
  A\&M University 3368 TAMU, College Station, TX 77843, USA.}
\renewcommand{\thefootnote}{\arabic{footnote}}

\begin{abstract}
  This paper is concerned with the construction of a fast algorithm
  for computing the maximum speed of propagation in the Riemann
  solution for the Euler system of gas dynamics with the co-volume
  equation of state.  The novelty in the algorithm is that it stops
  when a guaranteed upper bound for the maximum speed is reached with
  a prescribed accuracy. The convergence rate of the algorithm is
  cubic and the bound is guaranteed for gasses with the co-volume equation
  of state and the heat capacity ratio $\gamma$ in the range $(1,5/3]$.
\end{abstract}

\begin{keywords}
  Euler system of gas dynamics, co-volume equation of state, maximum
  speed of propagation, Riemann problem
  \end{keywords}

\begin{AMS}
65M60, 65M10, 65M15, 35L65
\end{AMS}

\pagestyle{myheadings} \thispagestyle{plain} \markboth{J.L. GUERMOND,
  B. POPOV}{Fast estimation of the maximum speed  in a Riemann problem}

\section{Introduction}
The objective of this paper is to propose a fast algorithm to
estimate from above the maximum wave speed in the Riemann
problem for the Euler equations of gas dynamics. This quantity, or an
approximation thereof, is used in many numerical methods to
approximate the solution of the compressible Euler equations using
various representations: finite volumes, discontinuous Galerkin,
continuous finite elements, \etc, see \eg \cite[Eq.~(9)]{Rusanov_1961}
or \cite[Eq.~(2.6b)]{Harten_Lax_VanLeer_1983}. The motivation for the
present work comes from a multidimensional finite element technique
recently proposed in \cite{Guermond_Popov_Hyp_2015}.  This method is
explicit and uses continuous finite elements on unstructured grids in
any space dimension. The artificial viscosity in the method is defined
so that having an upper bound on the maximum speed of propagation in
the one-dimensional Riemann problems guaranties that all the entropy
inequalities are satisfied and the algorithm is invariant domain
preserving in the sense of
\cite{Chueh_Conley_Smoller,Hoff_1985,Frid_2001}, \ie the density and
the internal energy are nonnegative and the specific entropy satisfies
a local minimum principle. It is also shown therein that the closer
the upper bound on the maximum wave speed the larger the admissible
CFL. We stress here that it is not the entire solution of the Riemann
problem that is required to ensure the above properties, but only a
guaranteed upper bound on the maximum wave speed. Standard Riemann
solvers, either approximate or exact, are designed to give an
approximation of the solution at the interface, and this in general
requires solving for intermediate states in the Riemann fan.  This task
is far more computationally intensive than estimating the maximum wave
speed of the Riemann fan. Note in passing that traditional estimates
of the maximum wave speed in ideal gases, which consists of taking
$\max(|u_L|+a_L,|u_R|+a_R)$, where $a$ is the speed of sound and $u$
is the velocity, could either be wrong or be an overestimate thereof (a
counterexample is produced in the appendix~\ref{Sec:counter_example}),
see \eg \cite[Eq.~(3.2)]{Kurganov_Tadmor_JCP_2000} or
\cite[\S10.5.1]{Toro_2009}. In conclusion, we claim that
$\max(|u_L|+a_L,|u_R|+a_R)$ is not an upper bound on the maximum wave
speed, and solving for the intermediate states, as done in traditional
Riemann solvers, is expensive and is not necessary to ensure that
invariant domains are preserved, as established in
\citep{Guermond_Popov_Hyp_2015,Guermond_Popov_Saavedra_Yong_ALE_2016}.

The novelty of the present work is the construction of a fast
algorithm for computing the maximum wave speed in the Riemann problem
for the Euler equations with the co-volume equation of state (which
includes ideal gases). One important feature of the algorithm is that
it terminates when an upper bound for the maximum speed is obtained
with a prescribed tolerance. The algorithm has a cubic convergence rate and
the upper bound is guaranteed for gasses with co-volume equation of
state and a heat capacity ratio $1<\gamma\leq 5/3$. We have obtained
$10^{-15}$ accuracy in at most three steps in all the numerical
experiments we have done with the proposed algorithm. We stress
  here that traditional Riemann solvers based on Newton's method on
  the intermediate pressure $p^*$ do not guarantee the upper bound on
  the maximum wave speed; actually most of these solvers converge to
  $p^*$ from below and thereby underestimate the maximum wave speed.
  Another original result of the present work is an inexpensive guaranteed
  upper bound of the maximum wave speed given in
  Remark~\ref{Remark:inexpensive_estimate}. This estimate is actually
  used to initialize the iterative algorithm.

This paper is organized as follows. We introduce some notation and
collect general statements about the one-dimensional Riemann problem
in \S\ref{Sec:Preliminaries}. The main result of this section is the
well known Proposition~\ref{Prop:lambda_max}. We introduce additional
notation in \S\ref{Sec:Compute_lambda_max} and recall the expression
for the extreme wave speeds of the 1-wave and the 3-wave.  We
introduce the algorithm to compute a guaranteed upper bound on the
maximum wave speed in \S\ref{Sec:Lambda_max}. It is shown in
Theorem~\ref{Thm:guaranteed_estimate} that the algorithm terminates in
finite time and delivers a guaranteed upper bound up to any prescribed
threshold. An important result that makes the convergence of the
method cubic and guarantees the upper bound is stated in
Theorem~\ref{Thm:RR_curve_below}. The gap condition proved in
Lemma~\ref{Lem:gap} is essential to prove that the algorithm
terminates in finite time. Both Theorem~\ref{Thm:RR_curve_below} and
Lemma~\ref{Lem:gap} are original to the best of our knowledge.  The
performance of the algorithm is tested in \S\ref{Sec:numerical};
it is shown in \S\ref{Sec:overhead} that when used in a
  numerical solver for the compressible Euler equations proposed in
  \citep{Guermond_Popov_Hyp_2015},
  the overhead induced by the proposed iterative method is minimal.
Additional theoretical statements for the co-volume equation of state
and counter-examples showing that $\max(|u_L|+a_L,|u_R|+a_R)$ may be
sometimes significantly smaller and sometimes
significantly bigger than the actual maximum wave speed of the
Riemann problem are reported in Appendices~\ref{Sec:Appendix} and
\ref{Sec:counter_example}. A source code is provided in
  Appendix~\ref{Sec:source_code}.

\section{Preliminaries} \label{Sec:Preliminaries} We introduce
notations and discuss the notion of Riemann problem in this
section. The main result is the well known
Proposition~\ref{Prop:lambda_max}. The reader who is already familiar
with Riemann problems and Proposition~\ref{Prop:lambda_max} can skip
this section and go directly to \S\ref{Sec:Compute_lambda_max}.

\subsection{Formulation of the problem}
Consider the compressible
Euler equations
\begin{equation}
\partial_t \bc +\DIV(\bef(\bc)) =0, \quad 
\bc=\left(\begin{matrix}
\rho \\
\bbm \\ 
E
\end{matrix}\right),\qquad
\bef(\bc)=
\left(
\begin{matrix}
\bbm \\ 
 \bbm {\otimes} \frac{\bbm}{\rho} + p\polI \\ 
\frac{\bbm}{\rho} (E+p) 
\end{matrix}\right),
\label{Euler_equations}
\end{equation}
where the independent variables are the density $\rho$, the momentum
vector field $\bbm$ and the total energy $E$.  The velocity vector
field $\bu$ is defined by $\bu:=\bbm/\rho$ and the internal energy
density $e$ by $e:=\rho^{-1}E-\frac12\|\bu\|_{\ell^2}^2$, where
$\|\cdot\|_{\ell^2}$ is the Euclidean norm. The quantity $p$ is the
pressure.  The symbol $\polI$ denotes the identity matrix in
$\polR^{d\CROSS d}$.
In this paper, we only consider the so-called {\em co-volume} gasses
obeying the co-volume Equation Of State (EOS),
\begin{equation}
p(1-b\rho)=(\gamma-1)e\rho,
\end{equation} 
with $b\ge 0$; the case $b=0$ corresponds to an ideal gas. The
constant $b$ is called the co-volume and $\gamma>1$ is the ratio of
specific heats. Sometimes, the co-volume EOS is called the Noble-Abel
EOS. We refer to \cite[Chapter 1.2]{Toro_2009}, \cite{Baibuz_1986} and
\cite{Johnston_2005} for more details on these EOS and the related
thermodynamics.

In the context of the method proposed in \cite{Guermond_Popov_Hyp_2015}, 
we consider the
following one-dimensional Riemann problem:
\begin{equation}
 \label{def:Riemann_problem} 
  \partial_t \bc + \partial_x (\bef(\bc)\SCAL\bn)=0, 
\quad  (x,t)\in \Real\CROSS\Real_+,\qquad 
\bc(x,0) = \begin{cases} \bc_L, & \text{if $x<0$} \\ \bc_R,  & \text{if $x>0$}, \end{cases}.
\end{equation}
where $\bn$ is any unit vector in $S^d(0,1)$.  The solution to this
problem is also invoked in many Riemann-solver-based Godunov type
methods, see \eg
\cite{Toro_Titarev_2006,Bouchut_Morales_2009,Castro_Gallardo_Marquina_2014,Balsara_Dumbser_Abgrall_2014}. We
stress that we are only interested in estimating from above the
maximum wave speed in \eqref{def:Riemann_problem}. It is shown in
\citep{Guermond_Popov_Hyp_2015} that having an upper bound on the
maximum speed of propagation of the one-dimensional Riemann problem
guaranties that the first-order algorithm described in
\citep{Guermond_Popov_Hyp_2015} is invariant domain preserving in the
sense of \cite{Chueh_Conley_Smoller,Hoff_1985,Frid_2001}, and that it
satisfies all the entropy inequalities.

The problem \eqref{def:Riemann_problem} is hyperbolic, since
$\partial_\rho p(\rho,s)$ is positive, \ie the Jacobian of
$\bef(\bc)\SCAL \bn$ is diagonalizable with real eigenvalues.  It is
well known in the case of ideal and co-volume gases with $\gamma>1$
that \eqref{def:Riemann_problem} has a unique (physical) solution,
which we henceforth denote $\bc(\bn,\bu_L,\bu_R)$, see \cite[Chapter
4.7]{Toro_2009}.

Note that the admissibility condition for the left and right states is
$0<1-b\rho_L,1-b\rho_R <1$.  A simple but lengthy verification shows
that the exact solution of the Riemann problem in all possible cases
stays admissible across the entire Riemann fan, that is $1-b\rho> 0$.
Being unaware of a reference for this result, we give a proof in the
appendix for completeness, see
Proposition~\ref{Prop:co_volume_admissible}.

\subsection{Structure of the Riemann problem}
The multidimensional Riemann problem \eqref{def:Riemann_problem} was
first described in the context of dimension splitting schemes in two
space dimensions in \cite[p.~526]{Chorin_1976}. The general case is
treated in \cite[p.~188]{Colella_1990}, see also \cite[Chapter
4.8]{Toro_2009}.  We make a change of basis and introduce
$\bt_1,\ldots,\bt_{d-1}$ so that $\{\bn,\bt_1,\ldots,\bt_{d-1}\}$
forms an orthonormal basis of $\polR^d$. With this new basis we have
$\bbm=(m, \bbm^{\perp})\tr$, where $m:=\rho u$, $u:=\bu\SCAL\bn$,
$\bbm^{\perp}\!:=\rho(\bu\SCAL \bt_1,\ldots,
\bu\SCAL\bt_{d-1}):=\rho\bu^\perp$. The projected equations are
\begin{equation}
\partial_t \bc +\partial_{x}(\bn\SCAL\bef(\bc)) =\mathbf{0}, \quad 
\bc=\left(\begin{matrix}
\rho \\
m\\
\bbm^{\perp} \\ 
E
\end{matrix}\right),\qquad
\bn\SCAL\bef(\bc)=
\left(
\begin{matrix}
m\\
\tfrac{1}{\rho}m^2 +p \\ 
u \bbm^{\perp} \\ 
u (E+ p)
\end{matrix}\right).
\label{Euler_projected}
\end{equation}

Using $\rho$, $u$, $\bu^\perp$ and the specific entropy
$s$ as dependent variables, the above problem can be rewritten
\begin{equation}\left\{
\begin{aligned}
&\partial_t \rho + \partial_x (\rho u) =0 \\
&\partial_t u + u \partial_x (u) + \rho^{-1}\partial_xp(\rho,s) =0 \\
&\partial_t \bu^\perp + u\partial_x (\bu^\perp) =0\\
&\partial_t s + u \partial_x (s) =0,
\end{aligned}\right.
\end{equation} 
and the  Jacobian is
\[
\left(\begin{matrix} 
u                        & \rho  & \mathbf{0}\tr  & 0 \\
\rho^{-1}\partial_\rho p   & u     & \mathbf{0}\tr & \rho^{-1}\partial_s p \\
\mathbf{0}             & \mathbf{0}&  u \polI   & \mathbf{0} \\  
0                      & 0 &     \mathbf{0}\tr   & u   
\end{matrix}\right).
\]
The eigenvalues  are $\lambda_1=u-\sqrt{\partial_\rho p(\rho,s)}$, with multiplicity 1, 
$\lambda_2=\cdots=\lambda_{d+1}=u$, with multiplicity $d$, and 
$\lambda_{d+2}=u+\sqrt{\partial_\rho p(\rho,s)}$, with multiplicity 1. One key observation
is that the Jacobian does not depend on $\bbm^\perp$, see \cite[p.~150]{Toro_2009}.
As a consequence the solution of the Riemann problem with data $(\bc_L,\bc_R)$,
is obtained in two steps. 

\subsubsection{First step} We solve  
the one-dimensional Riemann problem 
\begin{equation}
\partial_t\left(\begin{matrix}
\rho \\
m\\
\mathcal E
\end{matrix}\right)+ \partial_x
\left(
\begin{matrix}
m\\
\tfrac{1}{\rho}m^2 +p \\ 
\tfrac{m}{\rho} (\mathcal{E}+ p)
\end{matrix}\right)=0, \quad \text{with} \quad
p(1-b\rho)=(\gamma-1)\left(\mathcal{E} - \tfrac{m^2}{2\rho}\right)
\label{1D_Euler}
\end{equation}
with data $\bc_L:=(\rho_L,\bbm_L\SCAL\bn,\mathcal{E}_L)\tr$,
$\bc_R:=(\rho_R,\bbm_R\SCAL\bn,\mathcal{E}_R)\tr$, where $\mathcal{E}
= E - \frac12 \frac{\|\bbm^\perp\|_{\ell^2}^2}{\rho}$. The
one-dimensional Riemann problem \eqref{1D_Euler} is strictly
hyperbolic and all the characteristic fields are either genuinely
nonlinear or linearly degenerate. Any Riemann problem of this type
with $n$ fields has a unique self-similar weak
solution in Lax's form for any initial data such that
$\|\bu_L-\bu_R\|_{\ell^2}\le \delta$, see \cite{Lax_1957_II} and
\cite[Thm~5.3]{Bressan_2000}. In particular there are $2n$ numbers
\begin{equation}
\lambda_1^-\le \lambda_1^+ \le  \lambda_2^-\le \lambda_2^+ \le \ldots \le 
\lambda_n^-\le \lambda_n^+,
\end{equation}
defining up to $2m+1$ sectors (some could be empty) in the $(x,t)$ plane:
\begin{equation}
\frac{x}{t}\in (-\infty,\lambda_1^-), \quad
\frac{x}{t}\in (\lambda_1^-,\lambda_1^+), \ldots, \quad
\frac{x}{t}\in (\lambda_n^-,\lambda_n^+), \quad \frac{x}{t}\in (\lambda_n^+,\infty). 
\end{equation}
The Riemann solution is $\bu_L$ in the sector
$\frac{x}{t}\in (-\infty,\lambda_1^-)$ and $\bu_R$ in the last sector
$\frac{x}{t}\in (\lambda_n^+,\infty)$.  The solution in the other
sectors is either a constant state or an expansion, see
\cite[Chap.~5]{Bressan_2000}.  The sector
$\lambda_1^- t < x < \lambda_n^+ t$, $0<t$, is henceforth referred to as the
Riemann fan.  The key result that we are going to use is that there is
a maximum speed of propagation
$\lambda_{\max}(\bn,\bu_L,\bu_R):=\max(|\lambda_1^-|,|\lambda_n^+|)$
such that for $t\ge 0$ we have
\begin{equation}
\bu(x,t)= \begin{cases}
\bu_L, & \text{if $x \le -t \lambda_{\max}(\bn,\bu_L,\bu_R)$}\\
\bu_R, & \text{if $x \ge   t \lambda_{\max}(\bn,\bu_L,\bu_R)$}.
\end{cases} \label{finite_speed}
\end{equation}
In the special case of the one dimensional Euler equations of gas
dynamics \eqref{1D_Euler} with the co-volume EOS, we have $n=3$, the smallness assumption
on the Riemann data is not needed, see \cite[Chap. 4]{Toro_2009}, and
the Riemann fan is composed of three waves only: (i) two genuinely
nonlinear waves, $\lambda_i^{\pm}$, $i\in \{1,3\}$, which are either
shocks (in which case $\lambda_i^{-} = \lambda_i^{+}:= \lambda_i$) or
rarefaction waves; (ii) one linearly degenerate middle wave which is a
contact discontinuity, $\lambda_2^{-} = \lambda_2^{+}=:u^*$.

\subsubsection{Second step} We complete the full solution of the
Riemann problem \eqref{Euler_projected} by determining $\bbm^\perp$.
We compute $\bbm^\perp$ by solving $\partial_t \bbm^\perp
+ \partial_x(u \bbm^\perp)=0$. The solution is composed of up to four
states: $\bbm^\perp_L$, $\bbm^{\perp,*}_L$, $\bbm^{\perp,*}_R$,
$\bbm^\perp_R$,
\begin{equation}
\bbm=
\begin{cases} 
\bbm^\perp_L & \text{if $x\le \lambda_1^+ t$}, \\
\bbm^{\perp,*}_L & \text{if $\lambda_1^+ t\le x\le \lambda_2 t$},\\
\bbm^{\perp,*}_R & \text{if $\lambda_2 t\le x\le \lambda_3^- t$}, \\
\bbm^{\perp}_R & \text{if $\lambda_3^- t \le x$}, \\
\end{cases}
\end{equation}
where $\bbm^{\perp,*}_L$ is such that $\bbm^{\perp,*}_L =
\bbm^{\perp}_L $ if $\lambda_1^{-} \ne \lambda_1^{+}$ (\ie if the leftmost
wave is a rarefaction) or $\bbm^{\perp,*}_L$ is given by the
Rankine-Hugoniot condition $u_L\bbm^\perp_L - u^*\bbm^{\perp,*}_L =
\lambda_1 (\bbm^\perp_L -\bbm^{\perp,*}_L)$ otherwise (\ie if the
leftmost wave is a shock).  Similarly $\bbm^{\perp,*}_R$ is computed
as follows: $\bbm^{\perp,*}_R = \bbm^{\perp}_R $ if $\lambda_3^{-} \ne
\lambda_3^{+}$ (\ie the rightmost wave is a rarefaction) or
$\bbm^{\perp,*}_R$ is given by the Rankine-Hugoniot condition
$u_R\bbm^\perp_R - u^*\bbm^{\perp,*}_R = \lambda_3 (\bbm^\perp_R
-\bbm^{\perp,*}_R)$ otherwise (\ie the rightmost wave is a
shock). The Rankine-Hugoniot condition is automatically
satisfied across the contact wave $u^*(\bbm^{\perp,*}_L -
\bbm^{\perp,*}_R) = \lambda_2 (\bbm^{\perp,*}_L -\bbm^{\perp,*}_R)$
since $u^*:=\lambda_2$. Note that the solution given in
\cite[\S3.2.4,\S4.8]{Toro_2009} is correct only if the two extreme
waves (\ie the 1-wave and the 3-wave) are both rarefactions.

\subsubsection{Maximum wave speed} 
The bottom line of the above argumentation is that the organization of
the Riemann fan is entirely controlled by the solution of
\eqref{1D_Euler} and therefore we have
\begin{proposition} \label{Prop:lambda_max}
  In the case of gases obeying the co-volume equation of state,
  the maximum wave speed in \eqref{Euler_projected} is
\begin{equation}
\lambda_{\max}(\bc_L,\bc_R)=
\max((\lambda_1^-(\bc_L,\bc_R))_-,(\lambda_3^+(\bc_L,\bc_R))_+),
\end{equation}
$z_-=\max(0,-z)$, $z_+=\max(0,z)$, and
$\lambda_1^-(\bc_L,\bc_R)$,
$\lambda_3^+(\bc_L,\bc_R)$ are the two extreme wave speeds in
the  Riemann problem~\eqref{1D_Euler} with data
$(\bc_L,\bc_R)$.
\end{proposition}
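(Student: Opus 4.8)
The plan is to reduce the statement to the two-step construction of the Riemann solution recalled above: once the leftmost and rightmost edges of the multidimensional fan are identified with the extreme one-dimensional wave speeds $\lambda_1^-$ and $\lambda_3^+$, the formula follows by optimizing the symmetric bound in~\eqref{finite_speed}. The proposition being ``well known,'' the work is essentially bookkeeping of where the full state $\bc$ coincides with the data, so I would keep the argument short and lean on the decomposition rather than re-deriving the wave structure.

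First I would show that the solution of~\eqref{Euler_projected} equals $\bc_L$ for $x/t<\lambda_1^-$ and $\bc_R$ for $x/t>\lambda_3^+$. The normal components $(\rho,m,\mathcal E)$ solve~\eqref{1D_Euler}, whose Lax-form solution is constant and equal to the left (resp.\ right) data outside $[\lambda_1^-,\lambda_3^+]$. For the transverse momentum, the second step of the construction gives that $\bbm^\perp$ is transported with speed $u$ and jumps only across the speeds $\lambda_1^+$, $u^*=\lambda_2$, and $\lambda_3^-$; by the ordering $\lambda_1^-\le\lambda_1^+\le\lambda_2\le\lambda_3^-\le\lambda_3^+$ these all lie in $[\lambda_1^-,\lambda_3^+]$, hence $\bbm^\perp=\bbm^\perp_L$ for $x/t\le\lambda_1^+$ and $\bbm^\perp=\bbm^\perp_R$ for $x/t\ge\lambda_3^-$. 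Since $E=\mathcal E+\tfrac12\|\bbm^\perp\|_{\ell^2}^2/\rho$, reconstructing the total energy from the already-matched $\rho$, $m$ and the matched $\bbm^\perp$ shows that the entire state coincides with $\bc_L$ for $x/t<\lambda_1^-$ and with $\bc_R$ for $x/t>\lambda_3^+$. Thus $\lambda_1^-$ and $\lambda_3^+$ are exactly the extreme edges of the fan.

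It then remains to read off the optimal symmetric speed. Imposing $\bu=\bu_L$ on $x\le -t\lambda_{\max}$ forces $-\lambda_{\max}\le\lambda_1^-$, i.e.\ $\lambda_{\max}\ge(\lambda_1^-)_-$, while imposing $\bu=\bu_R$ on $x\ge t\lambda_{\max}$ forces $\lambda_{\max}\ge\lambda_3^+$, i.e.\ $\lambda_{\max}\ge(\lambda_3^+)_+$; the smallest constant meeting both is $\max((\lambda_1^-)_-,(\lambda_3^+)_+)$, the asserted value. A one-line case analysis using $\lambda_1^-\le\lambda_3^+$ confirms this equals $\max(|\lambda_1^-|,|\lambda_3^+|)$, so the formula is consistent with the general definition of $\lambda_{\max}$ stated above. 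The only genuinely non-routine point is the first step—ensuring that completing the solution with the transported transverse momentum does not widen the fan beyond $[\lambda_1^-,\lambda_3^+]$—which is precisely where the observation that the Jacobian of $\bn\SCAL\bef(\bc)$ is independent of $\bbm^\perp$ enters, as it prevents the transverse field from introducing characteristic speeds outside the range already spanned by the $1$-wave, the contact, and the $3$-wave.
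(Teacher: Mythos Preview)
Your proposal is correct and mirrors the paper's own reasoning: the paper does not supply a formal proof of this proposition but simply records it as ``the bottom line of the above argumentation,'' i.e., as the immediate consequence of the two-step decomposition (first solve \eqref{1D_Euler} for $(\rho,m,\mathcal E)$, then transport $\bbm^\perp$ along speeds already inside the fan). Your write-up is a faithful and correct elaboration of precisely that outline, including the key observation that the Jacobian's independence of $\bbm^\perp$ keeps the transverse step from enlarging the fan.
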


The goal of this paper is to propose a fast algorithm to estimate
accurately from above the maximum speed of propagation
$\lambda_{\max}(\bc_L,\bc_R)$.  This program is achieved by estimating
$\lambda_1^-(\bc_L,\bc_R)$ from below and
$\lambda_3^+(\bc_L,\bc_R)$ from above.

\section{Computation of $\lambda_1^-(\bc_L,\bc_R)$  and
$\lambda_3^+(\bc_L,\bc_R)$}
\label{Sec:Compute_lambda_max}
We compute $\lambda_1^-(\bc_L,\bc_R)$ and
  $\lambda_3^+(\bc_L,\bc_R)$ in this section. Most of the material is
  quoted from \cite[Chap.~4]{Toro_2009}. We henceforth abuse the
  notation by using the symbol $\bc$ for the primitive variables, \ie
  $\bc=(\rho,u,p)\tr$, instead of using the conservative variables as defined above.

\subsection{Riemann data with vacuum} When vacuum is present in
  the left or in the right state, the Riemann solution is composed of 
two waves only: one rarefaction on the non-vacuum side and a contact discontinuity 
on the vacuum side. The left and right maximum wave speeds are given by the following 
expressions:
\begin{equation}
\lambda_1^-(\bc_L,\bc_R)=u_L -a_L,\quad
\lambda_2(\bc_L,\bc_R)=u_L+\frac{2a_L(1-b\rho_L)}{\gamma-1},
 \label{estimate_speeds_Euler_vacuum_right}
\end{equation}
with $\bc_L=(\rho_L,u_L,p_L)\tr$ and $\bc_R=(0,u_R,0)\tr$ for vacuum on the
right, and
\begin{equation}
\lambda_2(\bc_L,\bc_R)=u_R-\frac{2a_R(1-b\rho_R)}{\gamma-1},\quad
\lambda_3^+(\bc_L,\bc_R)=u_R+a_R,
 \label{estimate_speeds_Euler_vacuum_left}
\end{equation}
with $\bc_L=(0,u_L,0)\tr$ and $\bc_R=(\rho_R,u_R,p_R)\tr$ for vacuum on the
left. See \S4.6.1, \S4.6.2 and \S4.7.1 in \cite{Toro_2009} for the
details. Then $\lambda_{\max}(\bc_L,\bc_R)=
\max((\lambda_1^-)_{-},(\lambda_2)_+)$ in the first case and
$\lambda_{\max}(\bc_L,\bc_R)= \max((\lambda_2)_{-},(\lambda_3^+)_+)$
in the second case.

\subsection{Riemann data without vacuum}
We now restrict ourselves to the case where both states,
$\bc_L$ and $\bc_R$, are not vacuum states, \ie $\rho_L,\rho_R>0$,
$e_L,e_R \ge 0$, and admissible states $1-b\rho_L, 1-b\rho_R > 0$.
A simple but lengthy verification shows that the exact solution
  of the Riemann problem in all possible cases is admissible for all
  times, that is $1-b\rho> 0$. This result is proved in
  Proposition~\ref{Prop:co_volume_admissible} in the
  appendix~\ref{Sec:Appendix}. In the numerical applications we have
  in mind, see \cite{Guermond_Popov_Hyp_2015}, only averages of the
  exact Riemann solution over the {\em entire} Riemann fan are
  invoked.  Therefore, if the left and the right states are admissible
  non-vacuum states, the average of the exact Riemann solution over
  the Riemann fan is an admissible non-vacuum state even if the exact
  solution contains vacuum. Since the method in
  \citep{Guermond_Popov_Hyp_2015} preserves all the convex invariant
  sets of the Riemann problem and the set $\{\rho>0,\ 1-b\rho>0\}$ is
  convex and invariant, we conclude that if the initial data is
  admissible and does not contain a vacuum state, the numerical method
  in \citep{Guermond_Popov_Hyp_2015} produces only admissible
  non-vacuum states at every time step.

The  no vacuum condition $0<\rho_L,\rho_R$ and the admissibility conditions
$0<1-b\rho_L,1-b\rho_R$, $0\le e_L,e_R$ imply that
$p_L,p_R\in[0,\infty)$.  Then the local sound speed is given by
$a_Z=\sqrt{\frac{\gamma p_Z}{\rho_Z(1-b\rho_Z)}}$ where the index $Z$
is either $L$ or $R$.  We introduce the following notation
$A_Z:=\frac{2(1-b\rho_Z)}{(\gamma+1)\rho_Z}$,
$B_Z:=\frac{\gamma-1}{\gamma+1}p_Z$ and the functions
\begin{align}
\phi(p)&:=f(p,L)+f(p,R)+u_R-u_L\\
f(p,Z)&:=\begin{cases} (p-p_{Z})\left(\frac{A_{Z}}{p+B_{Z}}\right)^{\frac12} 
& \text{if $p\ge p_{Z}$},\\
\frac{2a_{Z}(1-b\rho_Z)}{\gamma-1}\left(\left(\frac{p}{p_{Z}}\right)^{\frac{\gamma-1}{2\gamma}}-1\right) 
& \text{if  $p < p_{Z}$},
\end{cases}\label{Eq:rarefaction_shock}
\end{align}
where again $Z$ is either $L$ or $R$.  Let $a^0_Z$ be the speed of
sound for the ideal gas, and let $A^0_Z$, $B^0_Z$, $\phi^0(p)$ and $f^0(p,Z)$
be the above defined quantities in the ideal gas case, \ie we take
$b=0$ in all definitions. Then we have that
$a_Z=\frac{a^0_Z}{\sqrt{1-b\rho_Z}}$,
$f(p,Z)=f^0(p,Z)\sqrt{1-b\rho_Z}$ and
\begin{equation}
\phi(p)=f^0(p,L)\sqrt{1-b\rho_L} + f^0(p,R)\sqrt{1-b\rho_R} + u_R-u_L.
\end{equation}
It is shown in \cite[Chapter~4.3.1]{Toro_2009} (see also
\cite[Eq.~(5.36)]{Bressan_2000}) that the functions
$f^0(p,L),f^0(p,R)\in C^2( \Real_+;\Real)$ are monotone increasing and
concave down. Therefore the function $\phi(p)\in C^2( \Real_+;\Real)$
is also monotone increasing and concave down.  It can also be shown
that the weak third derivative is non-negative and locally bounded.
Observe that $\phi(0) =
u_R-u_L-\frac{2a^0_L\sqrt{1-b\rho_L}}{\gamma-1}-\frac{2a^0_R\sqrt{1-b\rho_R}}{\gamma-1}$.
Therefore, $\phi$ has a unique positive root if and only if $\phi(0)
<0$, \ie
\begin{equation}\label{no_vacuum}
u_R-u_L<\frac{2a^0_L\sqrt{1-b\rho_L}}{\gamma-1}+\frac{2a^0_R\sqrt{1-b\rho_R}}{\gamma-1}.
\end{equation}
This is the well known non-vacuum condition in the case of ideal gas
($b=0$ above), see \cite[(4.40), p.~127]{Toro_2009}. We henceforth
denote this root by $p^*$, \ie $\phi(p^*)=0$. We conventionally set
$p^*=0$ if \eqref{no_vacuum} does not hold.  It can be shown that,
whether there is formation of vacuum or not, the two extreme wave
speeds $\lambda_1^-(\bc_L,\bc_R)$ and
$\lambda_3^+(\bc_L,\bc_R)$ enclosing the Riemann fan are
\begin{align}
\lambda_1^-(\bc_L,\bc_R)=u_L - a_L\left(
1+\frac{\gamma+1}{2\gamma}\left(\frac{p^*-p_L}{p_L}\right)_+
\right)^\frac12, 
\label{estimate_speed_1_Euler}
\\
\lambda_3^+(\bc_L,\bc_R)=u_R + a_R\left(
1+\frac{\gamma+1}{2\gamma}\left(\frac{p^*-p_R}{p_R}\right)_+
\right)^\frac12 ,
\label{estimate_speed_3_Euler}
\end{align}
where $z_+:=\max(0,z)$. 

\begin{remark}[Two rarefaction waves] \label{Rem:RR_case} Note that if
  $\phi(p_L)>0$ then $p_L>p^*$, by monotonicity of $\phi$, thereby
  implying that $\lambda_1^-(u_L,u_R)=u_L - a_L$ in this
  case. Similarly if $\phi(p_R)>0$, then $p_R>p^*$ and
  $\lambda_3^+(u_L,u_R)=u_R + a_R$. This observation means that there
  is no need to compute $p^*$ to estimate
  $\lambda_{\max}(\bc_L,\bc_R)$ when
  $\phi(\min(p_L,p_R))>0$. This happens when the two extreme
  waves are rarefactions.
  Noticing that $p^*$ does  not need to be evaluated in this case is important
  since traditional techniques to compute $p^*$ in this situation may
  require a large number of (unnecessary) iterations, see
  \cite[p.~128]{Toro_2009}.  This is particularly true when
  \eqref{no_vacuum} is violated, since in this case there is a formation
  of a vacuum state.
\end{remark}

\section{Accurate estimation of $\lambda_{\max}$ from above} 
\label{Sec:Lambda_max}
In this
section we present an algorithm for computing an accurate lower
bound on $\lambda_1^-(\bc_L,\bc_R)$ and an accurate upper bound on 
$\lambda_3^-(\bc_L,\bc_R)$. This is done by estimating accurate lower and upper bounds on
the intermediate pressure state $p^*$.

\subsection{Elementary waves}
If the exact solution of the Riemann problem
contains two rarefaction waves, \ie $p^*\le \min(p_L,p_R)$, no
computation of $p^*$ is needed, see Remark~\ref{Rem:RR_case}.

Let us define $p_{\min}:=\min(p_L,p_R)$, $p_{\max}:=\max(p_L,p_R)$ and
let us assume that $\phi(p_{\min})\le 0$. Note that if
$\phi(p_{\min})=0$, then $p^*=p_{\min}$ and nothing needs to be
done. We now assume that $p^*>p_{\min}$ and we define
\begin{multline}
\phi_R(p)=\frac{2a_{L}(1-b\rho_L)}{\gamma-1}\left(\left(\frac{p}{p_{L}}\right)^{\frac{\gamma-1}{2\gamma}}-1\right) 
+\frac{2a_{R}(1-b\rho_R)}{\gamma-1}\left(\left(\frac{p}{p_{R}}\right)^{\frac{\gamma-1}{2\gamma}}-1\right) \\
+ u_R -u_L.
\end{multline}
Note that $\phi_R$ is monotone increasing and concave down. We also
have the following result (see also left panel in Figure~\ref{Fig:phi_phir}).
\begin{theorem} \label{Thm:RR_curve_below} Assume $\gamma \in
  (1,\frac53]$.  For any $p\ge 0$, the graph of $(p,\phi(p))$ is above
  the graph of $(p,\phi_R(p))$; more precisely, $\phi_R(p)= \phi(p)$
  for all $p\in [0, p_{\min}]$ and $\phi_R(p)< \phi(p)$ for all $p\in
  (p_{\min}, \infty)$.
\end{theorem}
\begin{proof} Note that the two curves $(p,\phi(p))$and
  $(p,\phi_R(p))$ coincide if $p\le p_{\min}$ because both are the sum
  of the two rarefaction curves and the constant $u_R-u_L$. If
  $p_{\min} < p\le p_{\max}$ the $(p,\phi(p))$ curve is the sum of one
  rarefaction curve, one shock curve starting from $p=p_{\min}$ and
  the constant $u_R-u_L$. If $p \ge p_{\max}$ the $(p,\phi(p))$ curve
  is the sum of two shock curves and the constant $u_R-u_L$, see
  \eqref{Eq:rarefaction_shock}.  Now we invoke
  Lemma~\ref{Lem:fS_GT_fR} twice to complete the proof, once with
  $p^0=p_{\min}$ ($\rho^0$ being the associated density)
  and once with $p^0=p_{\max}$ ($\rho^0$ being the 
  associated density).
\end{proof}

\begin{lemma} \label{Lem:fS_GT_fR} Let $p^0>0$ and $\rho^0$ such that
  $0<1-b\rho^0 <1$. Assume that $1 < \gamma\le \frac53$. We define the
  shock curve passing through $p^0$ by
\[
f_S(p) =
(p-p^0)\sqrt{\frac{2}{(\gamma+1)\rho^0}}\left(p+\frac{\gamma-1}{\gamma+1}p^0\right)^{-\frac12}
\sqrt{1-b\rho^0}
\]
and the rarefaction curve by
\[
f_R(p) = \frac{2\sqrt{\frac{\gamma p^0}{\rho^0}}}{\gamma-1}
\left(\left(\frac{p}{p^0}\right)^{\frac{\gamma-1}{2\gamma}}-1\right)\sqrt{1-b\rho^0}.
\]
Then $f_R(p) < f_S(p)$ for any $p> p^0$ and $f_R(p^0)= f_S(p^0)$, \ie
the shock curve is above the rarefaction curve.
\end{lemma}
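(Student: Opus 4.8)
The plan is to strip away all positive factors, rescale to a single dimensionless variable, and reduce the resulting transcendental inequality to a quadratic whose root condition is exactly $\gamma\le\tfrac53$. First I would divide both $f_S$ and $f_R$ by the common strictly positive factors $\sqrt{1-b\rho^0}$ and $(\rho^0)^{-1/2}$, and set $p=x\,p^0$ with $x\ge1$. Writing $c:=\frac{\gamma-1}{\gamma+1}$ and $\beta:=\frac{\gamma-1}{2\gamma}$, the assertion $f_R<f_S$ for $p>p^0$ becomes $F(x):=h_S(x)-h_R(x)>0$ for $x>1$, where
\[
h_S(x):=\sqrt{\tfrac{2}{\gamma+1}}\,(x-1)(x+c)^{-1/2},\qquad h_R(x):=\tfrac{2\sqrt{\gamma}}{\gamma-1}\bigl(x^{\beta}-1\bigr),
\]
and $F(1)=0$.

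A direct comparison is delicate because the two curves osculate to second order at $x=1$: one checks $F(1)=F'(1)=F''(1)=0$, and moreover the third derivative of $F$ is not of one sign on $(1,\infty)$, so neither a Taylor expansion nor a bound of the form $F'''\ge0$ suffices. Instead I would prove the stronger statement that $F'(x)>0$ on $(1,\infty)$, which together with $F(1)=0$ yields the claim. Since $h_S'(x)=\sqrt{\tfrac{2}{\gamma+1}}(x+c)^{-3/2}\bigl(\tfrac{x}{2}+c+\tfrac12\bigr)$ and $h_R'(x)=\gamma^{-1/2}x^{\beta-1}$ are both strictly positive for $x>0$, this is equivalent to $T(x):=h_S'(x)/h_R'(x)>1$; and because $T(1)=1$, it is enough to show $(\ln T)'>0$ on $(1,\infty)$.

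The decisive step is that $(\ln T)'$ rationalizes. Differentiating and clearing the (positive) denominators, $(\ln T)'(x)$ has the same sign as
\[
Q(x):=2(1-\beta)(x+c)(x+2c+1)-x(x+4c+3),
\]
which is a quadratic with leading coefficient $1-2\beta=\gamma^{-1}>0$ and satisfies $Q(1)=0$ (this last identity is precisely the second-order contact). Hence $Q>0$ on $(1,\infty)$ if and only if the second root of $Q$ is $\le1$, i.e. iff $Q(0)\le\gamma^{-1}$; a short computation reduces this to $(\gamma-1)(3\gamma-1)\le\gamma+1$, equivalently $\gamma(3\gamma-5)\le0$, that is $\gamma\le\tfrac53$.

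The main obstacle is exactly this reduction, and it also explains the sharpness of the hypothesis. The second-order osculation forces one to pass first to $F'$ and then to the logarithmic derivative, where multiplying out the denominators collapses the transcendental problem to the elementary quadratic $Q$; all the difficulty then concentrates in the root condition for $Q$, which coincides with $1<\gamma\le\tfrac53$. At the endpoint $\gamma=\tfrac53$ the two roots of $Q$ merge at $x=1$, so $Q(x)=\gamma^{-1}(x-1)^2>0$ for $x>1$ and the conclusion still holds, whereas for $\gamma>\tfrac53$ the second root exceeds $1$ and $F'<0$ just past $x=1$, so the inequality would fail.
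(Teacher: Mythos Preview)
Your argument is correct and complete: the rescaling, the computation of $h_S'$ and $h_R'$, the identity $T(1)=1$, the reduction of $(\ln T)'>0$ to the sign of the quadratic $Q(x)=2(1-\beta)(x+c)(x+2c+1)-x(x+4c+3)$, the verification $Q(1)=0$, and the root condition $\gamma Q(0)\le 1\Leftrightarrow(\gamma-1)(3\gamma-1)\le\gamma+1\Leftrightarrow\gamma(3\gamma-5)\le 0$ all check out. The chain $F(1)=0$, $F'>0$ on $(1,\infty)$ then gives $F>0$ as required, with strict inequality for every $p>p^0$ and every $\gamma\in(1,\tfrac53]$.

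Your route is genuinely different from the paper's. The paper does not work with $F$ or its derivative; instead it multiplies and divides so as to compare $\tilde f_S^2$ with $\tilde f_R^2$, then substitutes $y=x^{(\gamma-1)/2\gamma}$ and $\alpha=2\gamma/(\gamma-1)$ to reduce to showing $\tilde g(y)=\bigl(\tfrac{y^\alpha-1}{y-1}\bigr)^2-\alpha((\alpha-1)y^\alpha+1)>0$, which it handles by a third-order Taylor lower bound on $y^\alpha$ at $y=1$; the condition $\alpha\ge 5$ that emerges is exactly $\gamma\le\tfrac53$. Your approach avoids the squaring trick and the Taylor inequality entirely: passing to the logarithmic derivative of $h_S'/h_R'$ rationalizes the problem, and the threshold $\gamma=\tfrac53$ appears transparently as the value at which the two roots of the quadratic $Q$ coalesce at $x=1$. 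This makes the sharpness of the hypothesis visibly structural (for $\gamma>\tfrac53$ the second root moves past $1$ and $F'<0$ immediately to the right of $1$), whereas in the paper's argument the sharpness is established separately by a Taylor series remark. Both proofs are short; yours is arguably more illuminating about why $\tfrac53$ is the exact cutoff.
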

\begin{proof}
  We rescale both the shock and the rarefaction curves for 
  $p\ge p^0$ by introducing the variable $x=p/p^0$ and set $\tilde f_S(x):=f_S(p)$, 
$\tilde f_R(x) := f_R(p)$. The two curves now
  are now
\[
\tilde f_S(x)=\frac{ \sqrt{2p^0(1-b\rho^0)} }{\sqrt{(\gamma+1)\rho^0}}
\frac{(x-1)}{
\sqrt{x+\frac{\gamma-1}{\gamma+1}}},\quad
\tilde f_R(x)=\frac{2\sqrt{\gamma p^0(1-b\rho^0)}}{(\gamma-1)\sqrt{\rho^0}}
\left(x^{\frac{\gamma-1}{2\gamma}}-1\right).
\]
Note that $\tilde f_S(1)= \tilde f_R(1)=0$. We assume from now on that $x>1$ and we
want to show that $\tilde f_S(x)> \tilde f_R(x)$ for any $x> 1$ if $\gamma\le 5/3$.
Instead of proving this directly, we consider the function
\[
g(x)= 
\frac{\rho^0(\gamma+1)
\left(x+\frac{\gamma-1}{\gamma+1}\right)}{2p^0(1-b\rho^0)\left(x^{\frac{\gamma-1}{2\gamma}}-1\right)^2}
\left( \tilde f_S(x)^2-\tilde f_R(x)^2\right)
\]
and we will show that $g(x)> 0$ for any $x > 1$ if $\gamma\le
5/3$. We change variable again and set
$y:=x^{\frac{\gamma-1}{2\gamma}}$ with $\tilde g(y):=g(x)$. Then setting $y^\alpha=x$ with
$\alpha=2+\frac{2}{\gamma-1}$ we have
\begin{equation}\label{eq:phi_y}
\tilde g(y)=\left(\frac{y^\alpha-1}{y-1}\right)^2 - \alpha((\alpha-1)y^\alpha+1).
\end{equation} 
 We
rearrange the terms in \eqref{eq:phi_y} to get
\[
\tilde g(y)=\left(\frac{y^\alpha-1}{y-1}-\frac12\alpha(\alpha-1)(y-1)\right)^2 
- \frac14\alpha^2(\alpha-1)^2(y-1)^2-\alpha^2, \qquad \forall y>1.
\]
Using a Taylor expansion of $y^\alpha$ at $y=1$ for any $y> 1$
and $\alpha\ge 4$, we obtain the inequality
\[
\frac{y^\alpha-1}{y-1}\ge \alpha +\frac12\alpha(\alpha-1)(y-1) +
\frac16\alpha(\alpha-1)(\alpha-2)(y-1)^2.
\]
Using this inequality in \eqref{eq:phi_y}, we have that
\[
\tilde g(y)\ge \left(\alpha + \frac16\alpha(\alpha-1)(\alpha-2)(y-1)^2\right)^2
- \frac14\alpha^2(\alpha-1)^2(y-1)^2 - \alpha^2
\]
which is equivalent to
\[
\tilde g(y)\ge \left(\frac16\alpha(\alpha-1)(\alpha-2)(y-1)^2\right)^2
+ \frac{\alpha^2(\alpha-1)}{12}(\alpha-5)(y-1)^2.
\]
Therefore, we infer that $\tilde g(y)>0$ for any $y>1$ provided $\alpha\ge
5$. Note that the condition $\alpha \in [5,\infty)$ is equivalent to $\gamma \in (1,\frac53]$.
 Hence we conclude that $\tilde f_S(x)> \tilde f_R(x)$ for any $x> 1$ if
$1<\gamma\le 5/3$.
\end{proof}

\begin{remark}[Physical range of $\gamma$]
  Note that the $\gamma$-law usually assumes that
  $\gamma=\frac{M+2}{M}$, where $M\ge 3$ is the number of degrees of
  freedom of the molecules composing the gas. We have $M=3$ for
  monatomic gases and $M=5$ for diatomic gases. Therefore, the
  physical range of $\gamma$ for $M\in [3,\infty)$ is $\gamma \in
  (1,\frac53]$, which happens to be exactly the range of application 
of Lemma~\ref{Lem:fS_GT_fR}.
\end{remark}

\begin{remark}[Non physical range of $\gamma$]
  In the non-physical range $\gamma > 5/3$ it can be shown via Taylor
  series argument that there is $x_0=x_0(\gamma)>1$ such that $f_S(x_0) <
  f_R(x_0)$.  Therefore, the statements of both
  Theorem~\ref{Thm:RR_curve_below} and Lemma~~\ref{Lem:fS_GT_fR} are
  false if $\gamma\in(5/3,+\infty)$.
\end{remark}

\subsection{The algorithm to estimate $\lambda_{\max}$}
We now continue with the construction of an algorithm for computing
the intermediate pressure $p^*$, keeping in mind that the quantity we
are after is $\lambda_{\max}$.  Recall that we only consider the
case $\phi(p_{\min})<0$. Both functions $\phi$ and $\phi_R$ are
strictly monotone increasing and
$\lim_{p\to\infty}\phi(p)=\lim_{p\to\infty}\phi_R(p)=+\infty$,
therefore they  each have a unique zero. The zero of $\phi$ is $p^*$ and
we denote the zero of $\phi_R$ by $\tilde{p}^*$.  
The zero of $\phi_R$ is easy to compute 
\begin{equation}
\tilde{p}^*= \left(
\frac{a_L^0\sqrt{1-b\rho_L}+a_R^0\sqrt{1-b\rho_R}-\frac{\gamma-1}{2}(u_R-u_L)}
{ a_L^0\sqrt{1-b\rho_L}\ p_L^{-\frac{\gamma-1}{2\gamma}}
+ a_R^0\sqrt{1-b\rho_R}\ p_R^{-\frac{\gamma-1}{2\gamma} } }
\right)^{\frac{2\gamma}{\gamma-1}}
\end{equation}
and is referred to in the literature as the two-rarefaction
approximation to $p^*$, see for example equation (4.103) in
\cite[Chapter 4.7.2]{Toro_2009}. 
\begin{lemma} \label{Lem:pstar_less_tilde_pstar}
 We have $p^*<\tilde{p}^*$ in
the physical range of $\gamma$, $1<\gamma \le \frac53$.
\end{lemma}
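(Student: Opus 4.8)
The plan is to reduce everything to the ordering of the two curves established in Theorem~\ref{Thm:RR_curve_below} together with the strict monotonicity of $\phi$ and $\phi_R$. First I would record the two facts I am allowed to use: both $\phi$ and $\phi_R$ are strictly monotone increasing on $[0,\infty)$ with $\lim_{p\to\infty}\phi(p)=\lim_{p\to\infty}\phi_R(p)=+\infty$, so each has a unique positive zero, namely $p^*$ for $\phi$ and $\tilde p^*$ for $\phi_R$; and, by Theorem~\ref{Thm:RR_curve_below}, one has $\phi_R(p)=\phi(p)$ for $p\in[0,p_{\min}]$ and $\phi_R(p)<\phi(p)$ for every $p\in(p_{\min},\infty)$ in the range $1<\gamma\le\tfrac53$.

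Next I would locate $p^*$ relative to $p_{\min}$. Since we are in the standing case $\phi(p_{\min})<0$ and $\phi$ is increasing with $\phi(p^*)=0$, monotonicity gives $\phi(p_{\min})<0=\phi(p^*)$, hence $p^*>p_{\min}$. This is the crucial observation, because it places $p^*$ in the region where the inequality between the curves is \emph{strict}. Evaluating the conclusion of Theorem~\ref{Thm:RR_curve_below} at $p=p^*$ then yields
\[
\phi_R(p^*)<\phi(p^*)=0 .
\]

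Finally I would invoke the monotonicity of $\phi_R$ one more time: since $\phi_R$ is increasing and $\phi_R(\tilde p^*)=0$, the inequality $\phi_R(p^*)<0=\phi_R(\tilde p^*)$ forces $p^*<\tilde p^*$, which is exactly the claim. I do not expect a genuine obstacle here, since all the analytic difficulty has already been absorbed into Lemma~\ref{Lem:fS_GT_fR} and Theorem~\ref{Thm:RR_curve_below}; the only point requiring a little care is ensuring the inequality is strict, which is precisely why the step $p^*>p_{\min}$ matters (it guarantees $p^*$ lies in the open interval $(p_{\min},\infty)$ where $\phi_R<\phi$, rather than on the segment $[0,p_{\min}]$ where the two curves coincide).
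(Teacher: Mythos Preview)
Your argument is correct and is precisely the detailed version of what the paper does: the paper simply says the lemma is an easy consequence of Theorem~\ref{Thm:RR_curve_below}, and you have spelled out that consequence by using $p^*>p_{\min}$ to land in the strict-inequality region and then invoking the monotonicity of $\phi_R$.
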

\begin{proof}
This is an easy consequence of Theorem~\ref{Thm:RR_curve_below}.
To the best of our knowledge, this result, which is important to
establish accurate a priori error estimates on $p^*$, is new.
\end{proof}

We now propose an iterative process composed of two algorithms
  that constructs two sequences $(p_1^k,p_2^k)_{k\ge 0}$ such that
  $p_1^k\le p^* \le p_2^k$ for all $k\ge 0$ and $\lim_{k\to
    +\infty}p_1^k= p^* = \lim_{k\to +\infty}p_2^k$.  Before going
  through the details of the algorithms, we describe a stopping
  criterion that guarantees accuracy on the maximum wave speed.  Let
  $k\ge 0$ and assume that $p_1^k\le p^* \le p_2^k$. Then using
  \eqref{estimate_speed_1_Euler}-\eqref{estimate_speed_3_Euler}, we
  have $v_{11}^k \le \lambda_1^- \le v_{12}^k$ and $v_{31}^k \le
  \lambda_3^+ \le v_{32}^k$, where
\begin{subequations} \label{def_v11_v12_v31_v32}
\begin{align}
v_{11}^k = u_L - a_L\left(
1+\tfrac{\gamma+1}{2\gamma}\big(\tfrac{p_2^k-p_L}{p_L}\big)_+
\right)^{\frac12}\!\!\!,\quad&
v_{12}^k=u_L - a_L\left(
1+\tfrac{\gamma+1}{2\gamma}\big(\tfrac{p_1^k-p_L}{p_L}\big)_+
\right)^\frac12\!\!\!, 
\\
v_{31}^k = u_R + a_R\left(
1+\tfrac{\gamma+1}{2\gamma}\big(\tfrac{p_1^k-p_R}{p_R}\big)_+
\right)^\frac12\!\!\!,\quad& 
v_{32}^k= u_R + a_R\left(
1+\tfrac{\gamma+1}{2\gamma}\big(\tfrac{p_2^k-p_R}{p_R}\big)_+
\right)^\frac12\!\!\!,
\end{align}
\end{subequations}
and we have $\lambda_{\min}^k < \lambda_{\max}  \le \lambda_{\max}^k$ for any
$k\ge 0$ with the definitions
\begin{equation}
\lambda_{\min}^k := (\max((v_{31}^k)_+,(v_{12}^k)_-))_+,\qquad
  \lambda_{\max}^k := \max((v_{32}^k)_+,(v_{11}^k)_-). \label{def_of_lambda_max_min}
\end{equation} 
\begin{lemma} \label{Lem:stopping_criterion}
  Let $\epsilon>0$, and assume that $p_1^k\le p^* \le p_2^k$ and
  $\lambda_{\min}^k>0$ then
\begin{equation}
\left(\frac{\lambda_{\max}^k}{\lambda_{\min}^k}-1 \le \epsilon\right) 
\Longrightarrow \left(|\lambda_{\max}^k - \lambda_{\max}| \le \epsilon\lambda_{\max}\right) .
\end{equation}
\end{lemma}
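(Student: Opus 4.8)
The plan is to reduce the statement to the double inequality $\lambda_{\min}^k \le \lambda_{\max} \le \lambda_{\max}^k$ that is recorded in the text just above the lemma, and then close the argument with a single three‑term chain of inequalities that converts the relative gap between $\lambda_{\min}^k$ and $\lambda_{\max}^k$ into the desired bound on $|\lambda_{\max}^k-\lambda_{\max}|$. So the proof has two ingredients: first, confirm that the sandwich genuinely holds under the hypothesis $p_1^k\le p^*\le p_2^k$; second, exploit the ratio hypothesis together with the strict positivity $\lambda_{\min}^k>0$.

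For the sandwich I would argue from the monotone dependence of the two extreme speeds on $p^*$. Formula \eqref{estimate_speed_1_Euler} shows that $\lambda_1^-$ is a nonincreasing function of $p^*$ and \eqref{estimate_speed_3_Euler} shows that $\lambda_3^+$ is nondecreasing in $p^*$; substituting the brackets $p_1^k\le p^*\le p_2^k$ into these formulas reproduces exactly the four inequalities $v_{11}^k\le \lambda_1^- \le v_{12}^k$ and $v_{31}^k\le \lambda_3^+ \le v_{32}^k$ of \eqref{def_v11_v12_v31_v32}. Since $z\mapsto z_+$ is nondecreasing while $z\mapsto z_-$ is nonincreasing, applying these maps and then taking maxima propagates the bounds to $\lambda_{\min}^k \le \lambda_{\max} \le \lambda_{\max}^k$ as defined in \eqref{def_of_lambda_max_min}; the outer $(\cdot)_+$ in the definition of $\lambda_{\min}^k$ is harmless because the inner maximum is already nonnegative and $\lambda_{\max}\ge 0$. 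I expect the only place an error could creep in is this bookkeeping with the two sign conventions $(\cdot)_+$ and $(\cdot)_-$, so I would track the direction of each monotone map with care; this is really the main obstacle, and it is purely organizational rather than analytical.

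Once the sandwich is in hand the rest is immediate. Because $\lambda_{\max}\le \lambda_{\max}^k$ the difference $\lambda_{\max}^k-\lambda_{\max}$ is nonnegative, so $|\lambda_{\max}^k-\lambda_{\max}|=\lambda_{\max}^k-\lambda_{\max}$. The hypothesis $\lambda_{\min}^k>0$ makes the ratio meaningful and lets me rewrite $\tfrac{\lambda_{\max}^k}{\lambda_{\min}^k}-1\le\epsilon$ as $\lambda_{\max}^k\le(1+\epsilon)\lambda_{\min}^k$. Using $\lambda_{\min}^k\le\lambda_{\max}$ at both ends I then obtain
\[
\lambda_{\max}^k-\lambda_{\max}\le \lambda_{\max}^k-\lambda_{\min}^k\le \epsilon\,\lambda_{\min}^k\le \epsilon\,\lambda_{\max},
\]
which is precisely the claimed estimate $|\lambda_{\max}^k-\lambda_{\max}|\le\epsilon\lambda_{\max}$. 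Apart from the sign bookkeeping in the second paragraph, everything reduces to monotonicity and this short chain, so I anticipate no further difficulty.
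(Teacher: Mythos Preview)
Your proposal is correct and follows essentially the same route as the paper: both first invoke the sandwich $\lambda_{\min}^k\le\lambda_{\max}\le\lambda_{\max}^k$ (which the paper simply cites from the line preceding the lemma, while you spell out the monotonicity bookkeeping), and then close with the same one-line chain, the paper writing it as $\tfrac{\lambda_{\max}^k}{\lambda_{\max}}-1\le\tfrac{\lambda_{\max}^k}{\lambda_{\min}^k}-1\le\epsilon$ and you as the equivalent subtraction form.
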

\begin{proof} The inequalities $p_1^k\le p^* \le p_2^k$ together with
  \eqref{def_v11_v12_v31_v32} and \eqref{def_of_lambda_max_min} implies
that  $\lambda_{\min}^k < \lambda_{\max} \le \lambda_{\max}^k$, which in turn gives
\begin{align*}
\frac{|\lambda_{\max}^k-\lambda_{\max}|}{\lambda_{\max}} 
&= \frac{\lambda_{\max}^k-\lambda_{\max}}{\lambda_{\max}} 
= \frac{\lambda_{\max}^k}{\lambda_{\max}} - 1 
&& \text{since  $\lambda_{\max}< \lambda_{\max}^k$}\\
& \le \frac{\lambda_{\max}^k}{\lambda_{\min}^k} - 1 \le \epsilon
&&  \text{since  $\lambda_{\min}^k< \lambda_{\max}$}.
\end{align*}
This completes the proof.
\end{proof}

The process that we propose consists of two algorithms: the first one
generates an initial guess $(p_1^0,p_2^0)$ such that
$p_1^0<p^*<p_2^0$; the second one iterates until the stopping
criterion described in Lemma~\ref{Lem:stopping_criterion} is
satisfied.  The initialization is described in
Algorithm~\ref{initialization_algorithm}.

\begin{algorithm}[H]
\caption{Initialization}
\label{initialization_algorithm}
\begin{algorithmic}[1]
\Require $\epsilon$
\Ensure $p_1^0,p_2^0$
\State Set $p_{\min} = \min(p_L,p_R)$, $p_{\max} = \max(p_L,p_R)$
\If{$\phi(p_{\min})\ge 0$}  \State Set $p^*=0$ and compute $\lambda_{\max}$ using
\eqref{estimate_speed_1_Euler}-\eqref{estimate_speed_3_Euler} \Return \EndIf
\If{$\phi(p_{\max})= 0$} \State $p^*=p_{\max}$ and compute $\lambda_{\max}$ using
\eqref{estimate_speed_1_Euler}-\eqref{estimate_speed_3_Euler} \Return \EndIf
\If{$\phi(p_{\max}) < 0$} \State Set $p_1^0 = p_{\max}$ and $p_2^0=\tilde{p}^*$ 
\Comment{This guarantees that $p_1^0 < p^* < p_2^0$}
\Else
\State Set $p_1^0 = p_{\min}$ and $p_2^0=\min(p_{\max},\tilde{p}^*)$ 
\Comment{This guarantees that $p_1^0 < p^* < p_2^0$}
\EndIf 
\State Compute $\lambda_{\max}^0$, $\lambda_{\min}^0$, \textbf{if}
($\lambda_{\min}>0$ and $(\lambda_{\max}^0/\lambda_{\min}^0-1\le \epsilon$)) \textbf{return} \label{Exit_alg1}
\State Set $p_1^0:=\max(p_1^0, p_2^0-\phi(p_2^0)/\phi'(p_2^0))$
\Comment{Improve $p_1$ with one Newton step}
\label{Newtow_step_initialization}
\State Proceed to Algorithm~\ref{update_algorithm} with $(p_1^0,p_2^0)$
\end{algorithmic}
\end{algorithm}

Very often the initialization process described in
  Algorithm~\ref{initialization_algorithm} is accurate enough to be
  used in applications.  Line~\ref{Exit_alg1} in the algorithm checks
  the accuracy of the initialization in terms of relative error on
  $\lambda_{\max}$.

\begin{remark}[Non-iterative estimate] \label{Remark:inexpensive_estimate}
   It is possible to use Algorithm~\ref{initialization_algorithm} alone without iterating 
since $p_2^0$ is an upper bound on $p^*$ and
\begin{equation}
v_{11}^0 \le \lambda_1^{-}  < \lambda_3^{+} \le v_{32}^0.
\end{equation}
Hence, an inexpensive and guaranteed upper bound on $\lambda_{\max}$
is $\max((v_{11}^0)_{-},(v_{32}^0)_+)$.
\end{remark}

\begin{remark}[Extra Newton step]
Note that step~\ref{Newtow_step_initialization} in
Algorithm~\ref{initialization_algorithm} is a Newton iteration. This
step is optional, but we nevertheless include it to correct the bias
introduced by the computation of $\tilde{p}^*$. Our experience is that
$\tilde{p}^*$ is very often much closer to $p^*$ than both $p_{\min}$
and $p_{\max}$. The concavity of $\phi$ guaranties that $p_2^0-\phi(p_2^0)/\phi'(p_2^0)<
p^*$, whence $p_1^0 < p^* < p_2^0$ as desired.
\end{remark}

Given two positive numbers $p_1, p_2$, we now construct two quadratic
polynomials $P\du(p)$ and $P\dd(p)$ such that $P\du(p)$ interpolates
$\phi$ at the points $p_1, p_2, p_2$ and $P\dd(p)$ interpolates $\phi$
at the points $p_1,p_1,p_2$:
\begin{subequations}
\begin{align}
P\dd(p) &:= \phi(p_1) + \phi[p_1,p_1](p-p_1) + \phi[p_1,p_1,p_2](p-p_1)^2, \\
P\du(p) &:= \phi(p_2) + \phi[p_2,p_2](p-p_2) + \phi[p_1,p_2,p_2](p-p_2)^2.
\end{align}
\end{subequations}

Here we use the standard divided difference notation where repeating a
point means that we interpolate the function and its derivative at the
said point. We abuse the notation by omitting the index $k$ for the
two polynomials $P\du$ and $P\dd$.

\begin{figure}
\centerline{
\includegraphics[width=0.4\textwidth]{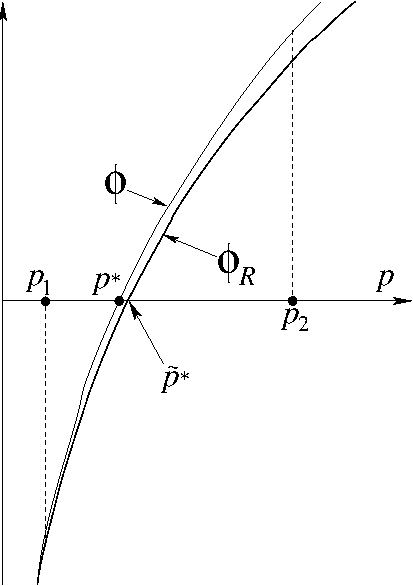}\hfil
\includegraphics[width=0.4\textwidth]{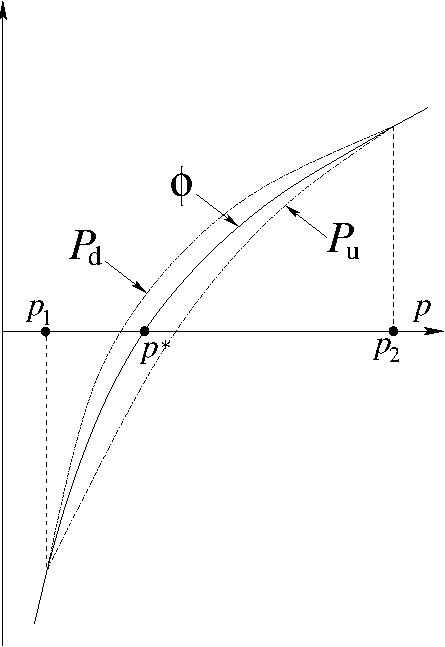}
}
\caption{Left: $\phi(p)=\phi_R(p)$ for all
  $p\le p_1$ and $\phi_R(p)< \phi(p)$ for all $p\in (p_{\min},
  \infty)$. Right: Quadratic polynomials $P_u(p)$ and $P_d(p)$.}
\label{Fig:phi_phir}
\end{figure}

\begin{lemma} \label{Lem:roots} $P\du$ and $P\dd$ have each a unique zero over the
  interval $(p_1,p_2)$ denoted $p\du(p_1,p_2)$ and $p\dd(p_1,p_2)$,
  respectively:
\begin{subequations}\label{roots}
\begin{align} 
p\dd(p_1,p_2)&=p_1  
- \frac{2\phi(p_1)}{\phi'(p_1)+\sqrt{\phi'(p_1)^2-4\phi(p_1)\phi[p_1,p_1,p_2]}}\label{roots_a}\\
p\du(p_1,p_2)&=p_2   
- \frac{2\phi(p_2)}{\phi'(p_2)+\sqrt{\phi'(p_2)^2-4\phi(p_2)\phi[p_1,p_2,p_2]}}.\label{roots_b}
\end{align} 
\end{subequations}
 and the following holds for any $p\in(p_1,p_2)$:
\begin{equation}
P\du(p) < \phi(p) < P\dd(p),\qquad \forall p\in (p_1,p_2)\label{Eq:quad_zeros}
\end{equation}
which implies that $p_1<p\dd(p_1,p_2)<p^*<p\du(p_1,p_2)<p_2$.
\end{lemma}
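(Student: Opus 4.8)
The plan is to treat the two quadratics in parallel, using the concavity of $\phi$ to locate their roots in $(p_1,p_2)$ and the sign of the (weak) third derivative of $\phi$ to bracket $\phi$ between them. Throughout I may assume $\phi(p_1)<0<\phi(p_2)$: since $p_1<p^*<p_2$ and $\phi$ is increasing with $\phi(p^*)=0$, this holds, and moreover $\phi'>0$ while $\phi''<0$ by strict concavity.

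First I would record the signs that drive everything. The leading coefficients of the two interpolants are second divided differences, and the mean value property for divided differences gives $\phi[p_1,p_1,p_2]=\tfrac12\phi''(\xi_1)<0$ and $\phi[p_1,p_2,p_2]=\tfrac12\phi''(\xi_2)<0$ for some $\xi_1,\xi_2\in(p_1,p_2)$. Hence both $P\dd$ and $P\du$ are strictly concave (downward) parabolas. Because $P\dd(p_1)=\phi(p_1)<0$ and $P\dd(p_2)=\phi(p_2)>0$, and likewise for $P\du$, each changes sign on $(p_1,p_2)$ and therefore, being quadratic, has exactly one root there (the second root, being of the same sign in the shifted variable, lies outside $(p_1,p_2)$). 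Solving the defining quadratics for this unique interior root and writing it in the rationalized form that keeps the denominator bounded away from zero yields precisely \eqref{roots_a}--\eqref{roots_b}; nonnegativity of the two discriminants is automatic since the sign change forces the roots to be real.

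The heart of the matter is the two-sided bound \eqref{Eq:quad_zeros}, and I would establish it via the Newton (Hermite) interpolation remainder. Appending $p$ as an extra node gives the exact identities
\[
\phi(p)-P\dd(p)=\phi[p_1,p_1,p_2,p]\,(p-p_1)^2(p-p_2),\qquad
\phi(p)-P\du(p)=\phi[p_1,p_2,p_2,p]\,(p-p_1)(p-p_2)^2 .
\]
By the Hermite--Genocchi representation, each third divided difference equals $\tfrac16$ times the integral of $\phi'''$ over a simplex; since the weak third derivative of $\phi$ is nonnegative, both divided differences are nonnegative, and they are strictly positive because $\phi'''>0$ almost everywhere (as one checks on the rarefaction branches, and by direct computation on the shock branches). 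For $p\in(p_1,p_2)$ the node products satisfy $(p-p_1)^2(p-p_2)<0$ and $(p-p_1)(p-p_2)^2>0$, so the two displays give $\phi(p)<P\dd(p)$ and $P\du(p)<\phi(p)$, which is \eqref{Eq:quad_zeros}. The ordering then follows by evaluating at $p^*$: since $p^*\in(p_1,p_2)$, \eqref{Eq:quad_zeros} gives $P\du(p^*)<\phi(p^*)=0<P\dd(p^*)$; as $P\dd$ is negative on $(p_1,p\dd(p_1,p_2))$ and positive on $(p\dd(p_1,p_2),p_2)$, the sign $P\dd(p^*)>0$ forces $p^*>p\dd(p_1,p_2)$, and symmetrically $P\du(p^*)<0$ forces $p^*<p\du(p_1,p_2)$.

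The step I expect to be the main obstacle is the rigorous use of the remainder formula when $\phi$ is only $C^2$ with a merely weak, piecewise-defined third derivative that jumps where the shock and rarefaction branches meet at $p_{\min}$ and $p_{\max}$. This is exactly what Hermite--Genocchi accommodates: $\phi''$ is Lipschitz, hence absolutely continuous, so $\phi'''\in L^\infty_{\mathrm{loc}}$ may legitimately be integrated over the simplex, and no pointwise mean value on $\phi'''$ is needed. It is also the place where the \emph{strict} inequalities in \eqref{Eq:quad_zeros} (and hence the strict ordering $p_1<p\dd<p^*<p\du<p_2$) must be secured, by verifying that $\phi'''$ does not vanish identically on the relevant subinterval.
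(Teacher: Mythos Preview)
Your argument is correct and follows essentially the same route as the paper: the Newton/Hermite remainder identities
\[
\phi(p)-P\dd(p)=\phi[p_1,p_1,p_2,p](p-p_1)^2(p-p_2),\qquad
\phi(p)-P\du(p)=\phi[p_1,p_2,p_2,p](p-p_1)(p-p_2)^2,
\]
together with positivity of the third divided differences, give the two-sided bound, while concavity of the parabolas plus the sign change $\phi(p_1)<0<\phi(p_2)$ yields the unique interior roots and the ordering.

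The one place where you differ from the paper is in justifying the sign of the third divided differences. The paper invokes the classical mean-value form $\phi[x_0,\dots,x_3]=\tfrac{1}{6}\phi'''(\xi)$ and the assertion $\phi'''(\xi)>0$ for $\xi>0$; you instead use the Hermite--Genocchi integral representation, which only needs $\phi''$ absolutely continuous and $\phi'''\ge 0$ a.e.\ (with strict positivity on a set of positive measure). Your version is the more careful of the two, since $\phi$ is only $C^2$ with a piecewise, merely weak third derivative across $p_{\min}$ and $p_{\max}$; the paper's pointwise mean-value statement is not literally available in that regularity class. Otherwise the two proofs coincide.
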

\begin{proof}
It is a standard result in approximation theory that
\begin{subequations}\label{interpolation_error}
\begin{align} 
\phi(p) - P\du(p) &= \phi[p_1,p_2,p_2,p](p-p_1)(p-p_2)^2\\
\phi(p) - P\dd(p) &= \phi[p_1,p_1,p_2,p](p-p_1)^2(p-p_2)
\end{align} 
\end{subequations}
where $\phi[p_1,p_2,p_2,p]$ and $\phi[p_1,p_1,p_2,p]$ are
divided differences.  For completeness we recall that $f[x] = f(x)$
and given $x_0\le \ldots \le x_n$ we have $f[x_0,\ldots, x_n] =
\frac{1}{n!}f^{(n)}(x_0)$ if $x_0= \ldots = x_n$ and
$f[x_0,\ldots,x_n] =\frac{f[x_0,\ldots,x_{n-1}]
  -f[x_1,\ldots,x_{n}]}{x_0-x_n}$ otherwise.  Moreover we define
$f[x_{\sigma(0)},\ldots, x_{\sigma(n)}] = f[x_0,\ldots, x_n]$ for any
$\sigma\in \calS^{n+1}$ where $\calS^{n+1}$ is the set all the
permutations over the set $\{0,\ldots,n\}$. It is known that for any
$x_0,\ldots,x_n$ we have $f[x_0,\ldots,x_n]=\frac{1}{n!}f^{(n)}(\xi)$
for some $\xi\in [\min(x_0,\ldots,x_n) , \max(x_0,\ldots,x_n)]$. In
the case at hand, we know that $\phi'''(\xi)>0$ for any $\xi>0$,
then \eqref{Eq:quad_zeros} is a simple consequence of
\eqref{interpolation_error}.
Both quadratic polynomials $P\du$ and $P\dd$ are concave down, and
both are negative at $p=p_1$ and positive at $p=p_2$; hence they each
have a unique zero in the interval $(p_1,p_2)$, which we denote
$p\du(p_1,p_2)$ and $p\dd(p_1,p_2)$ respectively.  Moreover, the
inequality \eqref{Eq:quad_zeros} implies that
$p\dd(p_1,p_2)<p^*<p\du(p_1,p_2)$.  The proof is complete.
\end{proof}

The result of Lemma~\ref{Lem:roots} is illustrated in the right panel
of Figure~\ref{Fig:phi_phir}.
The algorithm that we propose proceeds as follows: given a pair
$(p_1^k,p_2^k)$, compute $(p_1^{k+1},p_2^{k+1})$ such that $p_1^{k+1}
= p\dd(p_1^k,p_2^k)$ and $p_2^{k+1} = p\du(p_1^k,p_2^k)$ for $k\ge
0$.  Owing to Lemma~\ref{Lem:roots}, we have $p_1^k\le p_1^{k+1} \le
p^* \le p_2^{k+1} \le p_2^{k}$ and the convergence rate of the
iteration process is cubic.

\begin{algorithm}[H]
\renewcommand{\algorithmicrequire}{\textbf{Input:}}
\renewcommand{\algorithmicensure}{\textbf{Output:}}
\caption{Computation of $\lambda_{\max}$}
\label{update_algorithm}
\begin{algorithmic}[1]
\Require $p_1^0,p_2^0$, $\epsilon$
\Ensure $\lambda_{\max}$
\While{$\mathtt{true}$} 
\State Compute $\lambda_{\max}^k$ and $\lambda_{\min}^k$
\If{$\lambda_{\min}^k>0$} \Comment{Must happen owing to gap condition} \label{first_if}
 \If{$\frac{\lambda_{\max}^k}{\lambda_{\min}^k} -1 \le \epsilon$}
 \label{second_if}
 \State \textbf{exit infinite loop}
 \EndIf
\EndIf
\If{$\phi(p_1^k) >0$ or $\phi(p_2^k) <0$ } \label{third_if} 
\Comment{Check for roundoff error}
\State \textbf{exit infinite loop} 
\EndIf
\State $p_1^{k+1} =  p\dd(p_1^k,p_2^k)$  \label{Jacobi_1}
\State $p_2^{k+1} =  p\du(p_1^k,p_2^k)$  \label{Jacobi_2}
\EndWhile
\State $\lambda_{\max} = \lambda_{\max}^k$ \label{output}
\end{algorithmic}
\end{algorithm}

\begin{theorem} \label{Thm:guaranteed_estimate}
For every $\epsilon>0$,
there exists $k(u_L,a_L,u_R,a_R,\gamma,\epsilon)$ such that
Algorithm~\ref{update_algorithm} terminates when 
$k=k(u_L,a_L,u_R,a_R,\gamma,\epsilon)$, and in this case 
\begin{equation} \label{Eq:Thm:guaranteed_estimate}
|\lambda_{\max}^k - \lambda_{\max} |\le
\epsilon \lambda_{\max},
\end{equation}\ie the relative error on $\lambda_{\max}$
is guaranteed to be bounded by $\epsilon$.
\end{theorem}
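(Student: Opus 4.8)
The plan is to separate the statement into two independent claims: the correctness of the output at the exit point, and the finiteness of the exit index $k$. The correctness claim is essentially already packaged in Lemma~\ref{Lem:stopping_criterion}: throughout the iteration, Lemma~\ref{Lem:roots} guarantees $p_1^k\le p^*\le p_2^k$, so whenever the loop is exited through the nested test on lines~\ref{first_if}--\ref{second_if} we have both $\lambda_{\min}^k>0$ and $\lambda_{\max}^k/\lambda_{\min}^k-1\le\epsilon$, and Lemma~\ref{Lem:stopping_criterion} immediately yields the desired bound $|\lambda_{\max}^k-\lambda_{\max}|\le\epsilon\lambda_{\max}$. I would first observe that in exact arithmetic the roundoff safeguard on line~\ref{third_if} never fires: since $\phi$ is strictly monotone increasing with unique root $p^*$, the bracketing $p_1^k<p^*<p_2^k$ forces $\phi(p_1^k)<0<\phi(p_2^k)$, so the only possible exit is through the stopping criterion, and the guaranteed bound therefore holds at termination.

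It then remains to prove that the stopping criterion is met after finitely many iterations. By Lemma~\ref{Lem:roots}, the sequence $(p_1^k)$ is nondecreasing and $(p_2^k)$ is nonincreasing, both sandwiched around $p^*$; hence both converge, and because the iteration is precisely the root-finding step of Lemma~\ref{Lem:roots}, the common limit is $p^*$ (the cubic rate from Lemma~\ref{Lem:roots} gives an explicit bound on $k$, but plain monotone convergence already suffices for finiteness). The maps sending $p_2^k\mapsto v_{11}^k$, $p_1^k\mapsto v_{12}^k$, $p_1^k\mapsto v_{31}^k$, $p_2^k\mapsto v_{32}^k$ in \eqref{def_v11_v12_v31_v32} are continuous, so all four quantities converge to the corresponding extreme speeds, namely $v_{11}^k,v_{12}^k\to\lambda_1^-(\bc_L,\bc_R)$ and $v_{31}^k,v_{32}^k\to\lambda_3^+(\bc_L,\bc_R)$ as given by \eqref{estimate_speed_1_Euler}--\eqref{estimate_speed_3_Euler}. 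Consequently, by \eqref{def_of_lambda_max_min}, both $\lambda_{\max}^k$ and $\lambda_{\min}^k$ converge to $\lambda_{\max}$.

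The delicate point, and the main obstacle, is that the stopping test on line~\ref{second_if} is only evaluated once the guard $\lambda_{\min}^k>0$ on line~\ref{first_if} holds, and that the ratio $\lambda_{\max}^k/\lambda_{\min}^k$ be well defined and close to $1$; both require $\lambda_{\max}>0$. The convergences $\lambda_{\max}^k\to\lambda_{\max}$ and $\lambda_{\min}^k\to\lambda_{\max}$ themselves hold by continuity of $z\mapsto z_{\pm}$ and of the $\max$ in \eqref{def_of_lambda_max_min}, so no issue arises there; what is not automatic is the strict positivity of the common limit. This is exactly the content of the gap condition: Lemma~\ref{Lem:gap} guarantees that the two extreme speeds are strictly separated, whence $\lambda_{\max}=\max((\lambda_1^-)_-,(\lambda_3^+)_+)>0$. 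With $\lambda_{\max}>0$ fixed, $\lambda_{\min}^k\to\lambda_{\max}$ furnishes $k_0$ such that $\lambda_{\min}^k>0$ for all $k\ge k_0$, so the guard holds from $k_0$ onward; and $\lambda_{\max}^k/\lambda_{\min}^k\to 1$ furnishes $k_1$ such that $\lambda_{\max}^k/\lambda_{\min}^k-1\le\epsilon$ for all $k\ge k_1$. Setting $k(u_L,a_L,u_R,a_R,\gamma,\epsilon):=\max(k_0,k_1)$, both nested tests succeed at this index, the loop exits through line~\ref{second_if}, and by the correctness argument of the first paragraph the bound \eqref{Eq:Thm:guaranteed_estimate} holds.
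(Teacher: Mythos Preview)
Your proof is correct and follows essentially the same route as the paper: convergence $p_1^k,p_2^k\to p^*$ from Lemma~\ref{Lem:roots}, continuity of the speed formulas \eqref{def_v11_v12_v31_v32}--\eqref{def_of_lambda_max_min} to get $\lambda_{\min}^k,\lambda_{\max}^k\to\lambda_{\max}$, the gap Lemma~\ref{Lem:gap} to secure $\lambda_{\max}>0$, and Lemma~\ref{Lem:stopping_criterion} for the error bound at termination. Your explicit observation that the roundoff exit on line~\ref{third_if} cannot fire in exact arithmetic is a small but welcome addition that the paper's proof leaves implicit.
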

\begin{proof}
  Owing the gap Lemma~\ref{Lem:gap}, there is $c(\gamma)>0$ such that
  $\lambda_3^+ -\lambda_1^- \ge c(\gamma) (a_{\max} + a_{\min})$,
  which in turn implies that $d:=\lambda_3^+ -\lambda_1^->0$ owing to
  the hyperbolicity condition $\min(a_{\max},a_{\min})>0$. Hence
  $\lambda_{\max}= \max((\lambda_3^+)_+,(\lambda_1^-)_-) \ge
  \frac{d}{2}>0$, (recall that $\min(x_-,y_+)\ge \frac{|x-y|}{2}$).
  Since both sequences $(p_1^k)_{k\ge 0}$ and $(p_2^k)_{k\ge 0}$
  converge to $p^*$, there is $k_0(u_L,a_L,u_R,a_R,\gamma,\epsilon)$
  such that $\lambda_{\min}^k>\frac12 \lambda_{\max}\ge \frac14 d >0$
  for any $k\ge k_0(u_L,a_L,u_R,a_R,\gamma,\epsilon)$. Hence the
  condition of the \textbf{if} statement in line~\ref{first_if} of
  Algorithm~\ref{update_algorithm} is achieved for any $k\ge
  k_0(u_L,a_L,u_R,a_R,\gamma,\epsilon)$. Likewise there is
  $k(u_L,a_L,u_R,a_R,\gamma,\epsilon)\ge
  k_0(u_L,a_L,u_R,a_R,\gamma,\epsilon)$ such that the condition of the
  \textbf{if} statement in line~\ref{second_if} holds true since
  $\lim_{k\to +\infty} \frac{\lambda_{\max}^k}{\lambda_{\min}^k} =
  1$. Hence, when $k=k(u_L,a_L,u_R,a_R,\gamma,\epsilon)$ the algorithm
  terminates and the estimate \eqref{Eq:Thm:guaranteed_estimate} is
  guaranteed by Lemma~\ref{Lem:stopping_criterion}.  This completes
  the proof.
\end{proof}

\begin{remark}[Jacobi vs. Seidel iterations] Note that
  steps~\ref{Jacobi_1} and~\ref{Jacobi_2} in
  Algorithm~\ref{update_algorithm} are of Jacobi type.  The Seidel
  version of this algorithm is $p_1^{k+1} = p\dd(p_1^k,p_2^k)$,
  $p_2^{k+1} = p\du(p_1^{k+1},p_2^k)$.
\end{remark}

\begin{remark}[Roundoff errors]
  The algorithm converges so fast and is so accurate that it may
  happen that either the test $\phi(p_1^{k})>0$ or the test
  $\phi(p_2^{k})<0$ turns out to be true due to rounding errors. This
  then causes the discriminant in \eqref{roots_b} to be negative
  thereby producing NaN. To avoid the roundoff problem  one must
  check the sign of $\phi(p_1^{k})$ and $\phi(p_2^{k})$ before
  computing $p_1^{k+1}$ and $p_2^{k+1}$ (see line \ref{third_if} in
  Algorithm~\ref{update_algorithm}). If $\phi(p_1^{k})>0$ then
  $p^*=p_1^{k}$ up to roundoff errors and if $\phi(p_2^{k})<0$ then
  $p^*=p_2^{k}$ up to roundoff errors.
\end{remark}

\subsection{Estimation of $\lambda_1^-$ and $\lambda_3^+$} In
  some applications it may be important to estimate both the left and
  the right wave speeds $\lambda_1^-$ and $\lambda_3^+$. This is the
  case for instance in the so-called central-upwind schemes, see \eg
  \cite[\S3.1]{Kurganov_Noelle_Petrova_2001}, in the HLL, HLLC, HLLE
  approximate Riemann solvers, see \cite[p.~204,
  Rem~3.2]{Godlewski_Raviart_1996}, and in Arbitrary Lagrangian
  Eleurian hydrodynamics, see \eg
  \cite{Guermond_Popov_Saavedra_Yong_ALE_2016}.  It is possible to
  modify the Algorithm~\ref{update_algorithm} for this purpose as
  stated in the following corollary.
\begin{corollary}[Bounds on $\lambda_1^-$, $\lambda_3^+$]
  Consider Algorithm~\ref{update_algorithm} with line \ref{second_if}
  replaced by ``{\scriptsize $4{:}$} \textbf{\em if }
  $(\frac{v_{12}^k-v_{11}^k}{\lambda_{\min}^k}\le \epsilon$
  \textbf{\em and} $\frac{v_{32}^k-v_{31}^k}{\lambda_{\min}^k}\le
  \epsilon)$ \textbf{\em then}''
 and line \ref{output} changed to return $(v_{11}^k,v_{32}^k)$.  Then
 for every $\epsilon>0$, there exists an integer
 $k(u_L,a_L,u_R,a_R,\gamma,\epsilon)$ such that
 Algorithm~\ref{update_algorithm} terminates when
 $k=k(u_L,a_L,u_R,a_R,\gamma,\epsilon)$, and in this case $v_{11}^k$
 and $v_{32}^k$ satisfy the following guaranteed bounds:
\begin{equation}
0 \le \lambda_1^- - v_{11}^k \le \epsilon \lambda_{\max},\qquad
0 \le v_{32}^k - \lambda_3^+ \le \epsilon \lambda_{\max},
\end{equation}
\ie the error on $\lambda_1^-$ and $\lambda_3^+$ relative to $\lambda_{\max}$
is guaranteed to be bounded by $\epsilon$.
\end{corollary}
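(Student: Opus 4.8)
The plan is to reuse the sandwiching inequalities $v_{11}^k \le \lambda_1^- \le v_{12}^k$ and $v_{31}^k \le \lambda_3^+ \le v_{32}^k$, which were already derived from $p_1^k \le p^* \le p_2^k$ together with \eqref{estimate_speed_1_Euler}--\eqref{estimate_speed_3_Euler} and the monotonicity of the maps $p\mapsto v_{11},v_{12},v_{31},v_{32}$ in \eqref{def_v11_v12_v31_v32}. The two lower bounds $0\le \lambda_1^- - v_{11}^k$ and $0\le v_{32}^k - \lambda_3^+$ are then immediate from $v_{11}^k\le \lambda_1^-$ and $\lambda_3^+\le v_{32}^k$, and they hold at every step $k$ irrespective of the stopping test. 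For the two upper bounds I would sandwich once more: since $\lambda_1^-\le v_{12}^k$ we get $\lambda_1^- - v_{11}^k \le v_{12}^k - v_{11}^k$, and since $v_{31}^k\le \lambda_3^+$ we get $v_{32}^k - \lambda_3^+ \le v_{32}^k - v_{31}^k$.

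It then remains to convert these two interval widths into the target $\epsilon\lambda_{\max}$ bounds. When the modified line~4 triggers, the guard in line~\ref{first_if} guarantees $\lambda_{\min}^k>0$, and the modified test gives $v_{12}^k - v_{11}^k \le \epsilon\lambda_{\min}^k$ and $v_{32}^k - v_{31}^k \le \epsilon\lambda_{\min}^k$. Combining with the inequality $\lambda_{\min}^k<\lambda_{\max}$ established inside the proof of Lemma~\ref{Lem:stopping_criterion} yields $\lambda_1^- - v_{11}^k \le \epsilon\lambda_{\min}^k \le \epsilon\lambda_{\max}$ and $v_{32}^k - \lambda_3^+ \le \epsilon\lambda_{\min}^k \le \epsilon\lambda_{\max}$, which are exactly the claimed estimates.

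Finally, termination follows the same route as in Theorem~\ref{Thm:guaranteed_estimate}. The gap Lemma~\ref{Lem:gap} furnishes $d:=\lambda_3^+ - \lambda_1^- \ge c(\gamma)(a_{\max}+a_{\min})>0$, whence $\lambda_{\max}\ge \tfrac{d}{2}>0$; since $(p_1^k)_{k\ge0}$ and $(p_2^k)_{k\ge0}$ both converge to $p^*$ by Lemma~\ref{Lem:roots}, there is $k_0$ past which $\lambda_{\min}^k>\tfrac12\lambda_{\max}>0$, so the guard in line~\ref{first_if} is eventually active. Because the four maps in \eqref{def_v11_v12_v31_v32} are continuous in $p$, we have $v_{11}^k,v_{12}^k\to \lambda_1^-$ and $v_{31}^k,v_{32}^k\to \lambda_3^+$, so the two widths $v_{12}^k - v_{11}^k$ and $v_{32}^k - v_{31}^k$ tend to $0$ while $\lambda_{\min}^k$ stays bounded below; hence both ratios in the modified line~4 tend to $0$ and the test is met at some finite $k$. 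The only point that genuinely requires care — and the main obstacle — is checking that replacing the single criterion $\lambda_{\max}^k/\lambda_{\min}^k - 1\le \epsilon$ by the pair of componentwise tests does not destroy the finite-termination guarantee; this is precisely what the continuity and the convergence of the four speed functions secure.
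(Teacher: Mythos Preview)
The paper states this corollary without giving an explicit proof, presenting it as an immediate consequence of the machinery built for Theorem~\ref{Thm:guaranteed_estimate}. Your argument is correct and is exactly the natural proof the ``corollary'' label invites: you reuse the sandwich $v_{11}^k\le\lambda_1^-\le v_{12}^k$, $v_{31}^k\le\lambda_3^+\le v_{32}^k$, combine the modified stopping test with $\lambda_{\min}^k\le\lambda_{\max}$, and repeat the termination argument of Theorem~\ref{Thm:guaranteed_estimate} via the gap Lemma~\ref{Lem:gap} and the convergence $p_1^k,p_2^k\to p^*$. One very minor quibble: the inequality $\lambda_{\min}^k<\lambda_{\max}$ is really $\lambda_{\min}^k\le\lambda_{\max}$ (from $(v_{31}^k)_+\le(\lambda_3^+)_+$ and $(v_{12}^k)_-\le(\lambda_1^-)_-$), but the nonstrict version is all you need.
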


Note that in the above corollary $\lambda_1^-$ is approximated from
below and $\lambda_3^+$ is approximated from above. Hence, we always
guarantee that the exact Riemann fan is contained in the cone
$C^k:=\{(x,t) \st v_{11}^kt\le x\le v_{32}^k t, \ 0<t\}$ for any $k\ge
0$. Note also that $C^{k+1}\subset C^k$ for any $k\ge 0$.

\subsection{Gap condition}
The purpose of this section is to establish the following result,
which we call the gap condition.
\begin{lemma}[Gap condition] \label{Lem:gap} Given the left state
  $\bc_L:=(\rho_L,m_L,\mathcal{E}_L)$ and the right state
  $\bc_R:=(\rho_R,m_R,\mathcal{E}_R)$ of the one-dimensional
  Riemann problem \eqref{1D_Euler}, we have the following {\em gap}
  condition for the smallest and largest eigenvalues of the problem
\begin{equation}\label{Gap_ineq}
\lambda_3^{+}-\lambda_1^{-} \ge c(\gamma)(a_L+a_R)
\end{equation}
where  $a_L$, $a_R$ are the local sound speeds and $c(\gamma)$ is a constant defined by
\begin{equation}c(\gamma):=
\begin{cases}
\frac{2\sqrt{2(\gamma-1)}}{\gamma+1} &\text{if $\gamma\in(1,3]$,}\\
1 &\text{if $\gamma\in(3,+\infty)$.}
\end{cases}
\end{equation} 
\end{lemma}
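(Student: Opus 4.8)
The plan is to collapse the two-variable dependence on the left and right states into two independent single-variable inequalities by first rewriting the gap in terms of the intermediate velocity. In the non-vacuum case $\phi(0)<0$ there is a genuine root $p^*>0$ with $\phi(p^*)=0$, together with the standard relations $u^*=u_L-f(p^*,L)=u_R+f(p^*,R)$. Substituting $u_L=u^*+f(p^*,L)$ and $u_R=u^*-f(p^*,R)$ into \eqref{estimate_speed_1_Euler}--\eqref{estimate_speed_3_Euler} and abbreviating $S_Z:=\bigl(1+\tfrac{\gamma+1}{2\gamma}(\tfrac{p^*-p_Z}{p_Z})_+\bigr)^{1/2}$, the intermediate velocity $u^*$ cancels and one obtains the decoupled identity
\[
\lambda_3^+-\lambda_1^- = \bigl(a_L S_L - f(p^*,L)\bigr) + \bigl(a_R S_R - f(p^*,R)\bigr).
\]
It then suffices to prove the one-sided bound $a_Z S_Z - f(p^*,Z)\ge c(\gamma)\,a_Z$ for each $Z\in\{L,R\}$ and to add the two.

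I would dispose of the vacuum case $\phi(0)\ge 0$ first and directly, since there the identity above is unavailable (one has $p^*=0$ by convention, not a root of $\phi$). In that case both extreme waves are rarefactions, so $S_L=S_R=1$, while \eqref{no_vacuum} being violated forces $u_R-u_L>0$; hence $\lambda_3^+-\lambda_1^-=(u_R-u_L)+a_L+a_R\ge a_L+a_R\ge c(\gamma)(a_L+a_R)$ because $c(\gamma)\le 1$.

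For the decoupled bound the rarefaction branch $p^*<p_Z$ is immediate: there $S_Z=1$ and $f(p^*,Z)\le 0$ since $(p^*/p_Z)^{(\gamma-1)/(2\gamma)}\le 1$, so $a_ZS_Z-f(p^*,Z)\ge a_Z\ge c(\gamma)\,a_Z$. The substance is the shock branch $p^*\ge p_Z$. Here I would set $x:=p^*/p_Z\ge 1$, use $a_Z^2=\gamma p_Z/(\rho_Z(1-b\rho_Z))$ to write $f(p^*,Z)=(x-1)\,a_Z(1-b\rho_Z)\sqrt{2/(\gamma[(\gamma+1)x+(\gamma-1)])}$, and drop the factor $(1-b\rho_Z)<1$ to reach a co-volume-free lower bound $a_ZS_Z-f(p^*,Z)\ge a_Z\,G(x)$. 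The key step, which I expect to be the main obstacle, is the algebraic collapse: after placing both terms over the common denominator $\sqrt{(\gamma+1)x+(\gamma-1)}$, the awkward difference simplifies to the closed form
\[
G(x)=\frac{(\gamma-1)x+(\gamma+1)}{\sqrt{2\gamma\,[(\gamma+1)x+(\gamma-1)]}}.
\]
Once this compact expression is in hand, everything downstream is elementary; without it, minimizing the raw difference of two square roots would be unpleasant.

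Finally I would minimize $G$ on $[1,\infty)$. Writing $N:=(\gamma-1)x+(\gamma+1)$ and $D:=(\gamma+1)x+(\gamma-1)$, the sign of $(G^2)'$ equals that of the affine function $2(\gamma-1)D-(\gamma+1)N=(\gamma^2-1)x+(\gamma^2-6\gamma+1)$, whose unique root is $x^*=\frac{-\gamma^2+6\gamma-1}{\gamma^2-1}$. The leading coefficient being positive, $G$ first decreases then increases, so the interior minimizer $x^*$ is admissible exactly when $x^*\ge 1$, \ie when $\gamma\le 3$. For $\gamma\in(1,3]$ I would evaluate at the critical point using $D(x^*)=4\gamma/(\gamma-1)$ and $N(x^*)=\tfrac{2(\gamma-1)}{\gamma+1}D(x^*)$ to get $G(x^*)=\frac{2\sqrt{2(\gamma-1)}}{\gamma+1}$; for $\gamma>3$ the function is increasing on $[1,\infty)$, so the minimum is $G(1)=1$. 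In both regimes $G(x)\ge c(\gamma)$, which delivers the decoupled shock bound and, after summation, the gap condition \eqref{Gap_ineq}.
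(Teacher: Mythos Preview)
Your proof is correct and follows essentially the same route as the paper: both eliminate $u_R-u_L$ via $\phi(p^*)=0$, reduce the shock side to the single-variable function you call $G(x)$ (which coincides with the paper's $\Delta_0(x)$ after their simplification), drop the co-volume factor $1-b\rho_Z\le 1$, and minimize to obtain exactly the same critical point and the value $c(\gamma)$. Your per-side decoupling $\lambda_3^+-\lambda_1^-=\sum_Z\bigl(a_ZS_Z-f(p^*,Z)\bigr)$ is a clean repackaging that replaces the paper's three-case (RR/RS/SS) analysis by two independent branch checks, but the substantive computation is identical.
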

\begin{proof} There are three possible cases for the solution of the
  Riemann problem.

  Case 1. The solution contains two rarefaction waves:
  $\phi(p_{\min})\ge 0$.  This implies that either there exists
  $p^*\ge 0$ such that $\phi(p^*)= 0$ or we have vacuum, \ie $0\le
  \phi(0)$.  If $\phi(p^*)= 0$ we derive
\[
u_R-u_L= \frac{2a_L(1-b\rho_L)}{\gamma-1}
\left(1-\left(\frac{p^*}{p_L}\right)^{\frac{\gamma-1}{2\gamma}
  }\right)+ \frac{2a_R(1-b\rho_R)}{\gamma-1}
\left(1-\left(\frac{p^*}{p_R}\right)^{\frac{\gamma-1}{2\gamma}
  }\right) \ge 0,
\]
and in the case of vacuum we get
\[
u_R-u_L \ge \frac{2}{\gamma-1} \left(a_L(1-b\rho_L) + a_R(1-b\rho_R)\right)>0.
\]
Using the fact that $p^*\le p_{\min}$, we derive from
\eqref{estimate_speed_1_Euler}--\eqref{estimate_speed_3_Euler} that
\[
\lambda_3^{+}-\lambda_1^{-} = u_R-u_L +a_L + a_R \ge a_L + a_R,
\]
which proves \eqref{Gap_ineq} with constant $c(\gamma)=1$ in this case.

Case 2. The solution contains one rarefaction and one shock wave:
$\phi(p_{\min}) < 0=\phi(p^*)\le\phi(p_{\max})$.  Then, we have that
$p_{\min}< p^*\le p_{\max}$ and
\[
0=\phi(p^*) = (p^*-p_{\min}) \sqrt{ \frac{A_{\min}}{p^*+B_{\min}} } +
\frac{2a_{\max}(1-b\rho_{\max})}{\gamma-1}
\left(\left(\frac{p^*}{p_{\max}}\right)^{\frac{\gamma-1}{2\gamma} }
  -1\right) + u_R-u_L
\]
where we recall that $A_{Z}:=\frac{2(1-b\rho_{Z})}{(\gamma+1)\rho_{Z}}$ and $B_{Z}:=\frac{\gamma-1}{\gamma+1}p_{Z}$.
Using the above we derive the following 
\begin{equation*}
\begin{aligned}
\lambda_3^{+}-\lambda_1^{-} &= 
a_{\max}\left( 
1 +  \frac{2(1-b\rho_{\max})}{\gamma-1} 
\big(1-\big(\frac{p^*}{p_{\max}}\big)^{\frac{\gamma-1}{2\gamma} }\big)\right)\\
&+
a_{\min}\left( 
\big(1 +  \frac{\gamma+1}{2\gamma} \frac{p^*-p_{\min}}{p_{\min}}\big)^{\frac12}
-\frac{(p^*-p_{\min})}{a_{\min}}\sqrt{ \frac{A_{\min}}{p^*+B_{\min}} }
\right)\\
&\ge a_{\max} + a_{\min}\Delta(p_{\min}).
\end{aligned}
\end{equation*}
where we define $\Delta(Z)$ by
\begin{equation}
\Delta(Z) = \big(1 +  \frac{\gamma+1}{2\gamma} \frac{p^*-p_{Z}}{p_{Z}}\big)^{\frac12}
-\frac{p^*-p_{Z}}{a_{Z}}\sqrt{ \frac{A_{Z}}{p^*+B_{Z}} }.
\end{equation}\label{Eq:DeltaZ}
We make a substitution $x=\frac{p^*}{p_{\min}}\ge 1$ and, with a an abuse of notation, 
we transform $\Delta(p_{\min})$ into 
\begin{equation}\label{Eq:delta_x_b}
\Delta(x)= \left(1 +  \frac{\gamma+1}{2\gamma} (x-1)\right)^{\frac12}
-\frac{(x-1)(1-b\rho_{\min})}{ (x+\frac{\gamma-1}{\gamma+1} )^{\frac12}}
 \sqrt{\frac{2}{\gamma(\gamma+1)}}.
\end{equation}
Using the property that $0\le 1-b\rho_{\min}\le 1$, we derive
\begin{equation}\label{Eq:delta_x}
\Delta(x)\ge \Delta_0(x) :=\left(1 +  \frac{\gamma+1}{2\gamma} (x-1)\right)^{\frac12}
-\frac{x-1}{ (x+\frac{\gamma-1}{\gamma+1} )^{\frac12}}
 \sqrt{\frac{2}{\gamma(\gamma+1)}}.
\end{equation}
We now want to find the minimum of the function $\Delta_0(x)$ in the interval $x\in [1,+\infty)$. We transform 
$\Delta_0(x)$ as follows
\[
\Delta_0(x)= \left( \frac{\gamma+1}{2\gamma} \right)^\frac12
\left(x+\frac{\gamma-1}{\gamma+1} \right)^{-\frac12}
\left(
x+\frac{\gamma-1}{\gamma+1} - (x-1)\frac{2}{\gamma+1}
\right)
\]
and after another substitution $y=x-1$, and another abuse of notation,
we have
\[
\Delta_0(y)= \left( \frac{\gamma+1}{2\gamma} \right)^\frac12
\left(y+\frac{2\gamma}{\gamma+1} \right)^{-\frac12}
\left(
\frac{\gamma-1}{\gamma+1} y + \frac{2\gamma}{\gamma+1} 
\right)= \frac{\gamma-1}{(2\gamma(\gamma+1))^\frac12}\psi(y)
\]
where
$\psi(y)=(y+\frac{2\gamma}{\gamma-1})\left(y+\frac{2\gamma}{\gamma+1}
\right)^{-\frac12}$.  The function $\psi(y)$ has a unique minimum on
the interval $[0,+\infty)$ at the point
$y_{\min}=\frac{2\gamma(3-\gamma)}{(\gamma-1)(\gamma+1)}$ provided
that $\gamma\le 3$.  The value of the minimum is
$\psi(y_{\min})=4\sqrt{\frac{\gamma}{(\gamma-1)(\gamma+1)}}$. If
$\gamma >3$ then the minimum is at $y=0$ and the value is
$\psi(0)=\frac{\sqrt{(\gamma+1)}}{\gamma-1}$.  Using the minimum value
of $\psi$ we get the following minimum of the function $\Delta_0(x)$
on the interval $[1,+\infty)$:
\[
\Delta_0(x)\ge \frac{2\sqrt{2(\gamma-1)}}{\gamma+1} \text{ if
  $\gamma\in(1,3]$,} \quad \text{and}\quad \Delta_0(x)\ge 1 \text{ if
  $\gamma\in(3,+\infty)$.}
\]
This finishes the proof in the second case because for the full range
of $\gamma$ we get $\Delta(x)\ge\Delta_0(x)\ge c(\gamma)$. This again
proves \eqref{Gap_ineq} since
$c(\gamma)\le 1$ for any $\gamma\in[1,\infty)$.

Case 3. The solution contains two shock waves: $\phi(p_{\min})
\le\phi(p_{\max}) < 0=\phi(p^*)$.  Then, we have that $p_{\min}\le
p_{\max}< p^*$ and
\begin{equation}
0=\phi(p^*) = 
(p^*-p_{\min}) \sqrt{ \frac{A_{\min}}{p^*+B_{\min}} } +(p^*-p_{\max}) \sqrt{ \frac{A_{\max}}{p^*+B_{\max}} } 
+ u_R-u_L.
\end{equation}
Similar to the previous case we derive 
\begin{equation*}
\begin{aligned}
\lambda_3^{+}-\lambda_1^{-} &= 
a_{\min}\left( 
\big(1 +  \frac{\gamma+1}{2\gamma} \frac{p^*-p_{\min}}{p_{\min}}\big)^{\frac12}
-\frac{(p^*-p_{\min})}{a_{\min}}\sqrt{ \frac{A_{\min}}{p^*+B_{\min}} }
\right)\\
&+
a_{\max}\left( 
\big(1 +  \frac{\gamma+1}{2\gamma} \frac{p^*-p_{\max}}{p_{\max}}\big)^{\frac12}
-\frac{(p^*-p_{\max})}{a_{\max}}\sqrt{ \frac{A_{\max}}{p^*+B_{\max}} }
\right)\\
&\ge a_{\max} \Delta(p_{\min})+ a_{\min}\Delta(p_{\min})
\end{aligned}
\end{equation*}
where $\Delta(Z)$ is the same as before, see \eqref{Eq:DeltaZ}. 
We now use the fact that $\Delta(Z)\ge c(\gamma)$ 
when $p_Z\le p^*$ to finish the proof in this case.
\end{proof}

\section{Numerical illustrations} \label{Sec:numerical} We illustrate
the performances of Algorithm~\ref{update_algorithm} in this
section. We only consider test cases where there is at least one
shock, since the cases with two expansion waves are trivial. The set
of test problems we use is based on the performance tests given in
\cite[Section 4.3.3]{Toro_2009}.  The code that we used is included in
Appendix~\ref{Sec:source_code}.

\subsection{Fast expansion and slow shock}
Algorithm~\ref{update_algorithm} may terminate and give an estimate on
$\lambda_{\max}$ with the required accuracy before $p^*$ is estimated
correctly. This situation may happen when one of the two extreme waves
is a fast expansion (rarefaction) and the other wave is a slow shock. To illustrate
this effect, let us assume for instance that the left wave is a fast
expansion and the right wave is the slow shock, say $p_L>p^*>p_R$ and
$(\lambda_1^-)_{-} >( \lambda_3^-)_{+}=(\lambda_3^+)_+$. Note that in
this case we always have $p_R\le p_1^k\le p_2^k\le p_L$ for any
$k\ge 0$; hence, $v_{11}^k = v_{12}^k= u_L -a_L = \lambda_1^-$ for any
$k\ge 0$.  At some point in the algorithm there will be an iteration
level $k$ such that both $p_1^k$ and $p_2^k$ are close enough to $p^*$
so that
$0 \le v_{32}^k-\lambda_3^+ \le (\lambda_1^-)_- -(\lambda_3^+)_+$ and
$0 \le v_{31}^k-\lambda_3^+ \le (\lambda_1^-)_- -(\lambda_3^+)_+$.
Hence, using that $x\ge y$ implies that $x-y + y_+ -x_+\ge 0$, we have
\begin{align*}
(v_{11}^k)_- -  (v_{32}^k)_+ &= (\lambda_1^-)_- - v_{32}^k + v_{32}^k-(v_{32}^k)_+\\
& \ge (\lambda_3^+)_+ - \lambda_3^++ v_{32}^k -(v_{32}^k)_+\ge 0.
\end{align*}
This means that  $\lambda_{\max}^k :=
\max((v_{32}^k)_+,(v_{11}^k)_-)=(v_{11}^k)_-=(\lambda_1^-)_-$. Likewise
\begin{align*}
(v_{12}^k)_- -  (v_{31}^k)_+ &= (\lambda_1^-)_- - v_{31}^k + v_{31}^k-(v_{31}^k)_+\\
& \ge (\lambda_3^+)_+ - \lambda_3^++ v_{31}^k -(v_{31}^k)_+\ge 0,
\end{align*}
\ie 
$\lambda_{\min}^k := \max((v_{31}^k)_+,(v_{12}^k)_-)_{+}
=(v_{12}^k)_-= (\lambda_1^-)_-$.
In conclusion at iteration $k$, we have
$\frac{\lambda_{\max}^k}{\lambda_{\min}^k} -1 =0$; in other word the
algorithm stops irrespective of the tolerance, and $\lambda_{\max}^k =
\lambda_1^-$ but $p_1^k$ and $p_2^k$ may still be far from $p^*$.
To illustrate this phenomenon we consider the following test cases:
\begin{table}[H]
\centering\begin{tabular}{||c|c|c|c|c|c|c||}\hline
case &$\rho_L$ & $\rho_R$ & $u_L$ & $u_R$ & $p_L$ & $p_R$ \\
\hline
1 & 1.0 & 1.0 & 0.0 & 0.0 & 0.01 & 100.0 \\ \hline
2 & 1.0 & 1.0 & -1.0 & -1.0 & 0.01 & 100.0 \\ \hline
3 & 1.0 & 1.0 & -2.18 & -2.18 & 0.01 & 100.0 \\\hline
\end{tabular}\vspace{-0.5\baselineskip}
\end{table}
We run the algorithm with $\epsilon=10^{-15}$. The results are
\begin{table}[H]
\centering\begin{tabular}{||c|c|c|c||}\hline
case & $k$ & $\lambda_{\max}^k$ & $\lambda_{\max}$ \\ \hline
1      &  0    &  11.83215956619923 & 11.83215956619923  \\ \hline
2      &  1    &  10.83215956619923 & 10.83215956619923   \\ \hline
3      &  2    &    \ 9.65215956619923 & \ 9.65215956619923  \\ \hline
\end{tabular}
\centering\begin{tabular}{||c|c|c|c|c||}\hline
case & $k$ &  $p_1^k$ & $p_2^k$& $p^*$ \\ \hline
1      &  0    &  37.70559999364363&82.98306927558072 & 46.09504424886797\\ \hline
2      &  1    &  45.87266091833658&46.70007404915459 & 46.09504424886797\\ \hline
3      &  2    &  46.09504109404150&46.09505272562230 & 46.09504424886797 \\ \hline
\end{tabular}\vspace{-0.5\baselineskip}
\end{table}
In the first case, The algorithm terminates just after the
initialization, \ie at $k=0$, and gives the exact value of
$\lambda_{\max}$ up to a rounding error, but it gives
$p_1^0< p^*< p_2^0$.  In the second case the algorithm terminates at
$k=1$ with the exact value of $\lambda_{\max}$ up to a rounding
error, but it gives $p_1^1< p^*< p_2^1$.  By biasing the problem a
little bit more to the right, \ie by taking $u_L=u_R=2.18$ , the
algorithm terminates at $k=2$ and gives again the exact value of
$\lambda_{\max}$ up to a rounding error, but $p_1^2< p^*< p_2^2$. We
have verified that for $u_L=u_R\ge 2.2$ the right-moving shock wave is
the fastest and the algorithm always terminates at $k=3$ and gives
$p_1^k=p^*=p_2^k$ up to round-off errors.


\subsection{Fast shock} \label{Sec:fast_shock} The most demanding
situation happens when the fastest wave is a shock, since in this case
the algorithm must find $p^*$ up to the assigned tolerance to
terminate. We consider the following two cases introduced in
\cite[Section 4.3.3]{Toro_2009}:
\begin{table}[H]
\centering\begin{tabular}{||c|c|c|c|c|c|c||}\hline
case &$\rho_L$ & $\rho_R$ & $u_L$ & $u_R$ & $p_L$ & $p_R$ \\ \hline
1 & 1.0 & 1.0 & 10.0 & 10.0 & 1000.0 & 0.01 \\ \hline
2 &5.99924 &5.99242&19.5975&-6.19633&460.894&46.0950\\\hline
\end{tabular}\vspace{-0.5\baselineskip}
\end{table}
\noindent 
In case 1, the left wave is a rarefaction and the right wave
is a shock. In case 2, both waves are shocks. We run the algorithm
with various values of $\epsilon$; the results are
\begin{table}[H]
\centering\begin{tabular}{||c|c|c|c|c|c||}\hline
 &$\epsilon$   & $k$  & $\lambda_{\max}^k$      & $p_1^k$                         & $p_2^k$ \\ \hline
 1&$10^{-1}$     &  1    &  33.81930602421521  & 455.2466713625296   &  472.7977828960125 \\ \hline
 1&$10^{-2}$     &  2    &  33.51755796979217  & 460.8933865271423   &  460.8946107187795 \\ \hline 
 1&$10^{-15}$   &  3    &  33.51753696690324  & 460.8937874913834   &  460.8937874913835 \\ \hline \hline
 2&$10^{-1}$     & 1     &  12.25636731290528 &  1691.520678281327   &  1692.676852734373 \\ \hline
 2&$10^{-4}$     & 2     &  12.25077812313116 &  1691.646955398068   &  1691.646955407751 \\ \hline
 2&$10^{-15}$   & 3      & 12.25077812308434 &  1691.646955399126    &  1691.646955399126 \\ \hline
\end{tabular}\vspace{-0.5\baselineskip}
\end{table}
We observe that the algorithm converges very fast and it takes three
steps to reach $10^{-15}$ accuracy on $\lambda_{\max}$. These two
examples are representative of all the tests we have done in that most
of the times the tolerance $10^{-15}$ is achieved in at most three
steps.

When running the code $1,000,000$ times on case 2 with $10^{-15}$
tolerance, which amounts to three iterations per case, the total CPU
time was $0.736$ seconds on a machine with the following
characteristics: Intel(R) Xeon(R) CPU E3-1220 v3 \@ 3.10GHz.

\subsection{Overhead estimation} \label{Sec:overhead}
In order to give an idea of the overhead cost of estimating the
maximum wave speed with the above algorithm, we test the code on the
so-called Sod shocktube and Leblanc shocktube.  The data for these two
Riemann problems are recalled in the following table:
\begin{table}[H]
\centering\begin{tabular}{||c|c|c|c|c|c|c|c||}\hline
case   &$\gamma$     &$\rho_L$ & $\rho_R$ & $u_L$ & $u_R$ & $p_L$ & $p_R$ \\ \hline
Sod    &$7/5$        & 1.0     & 0.125    & 0.0  & 0.0   & 1.0   & 0.1 \\ \hline
Leblanc&$5/3$        & 1.0     &0.001     & 0.0  & 0.0   & 0.1   & $10^{-10}$\\\hline
\end{tabular}\vspace{-0.5\baselineskip}
\end{table}
The problems are set in two space dimensions in the domain
$(0,0.1)\CROSS (0,1)$ and are solved by using a finite element
technique described in \cite{Guermond_Popov_Hyp_2015}. It is an
explicit method using continuous finite elements on unstructured grids
that works in any space dimension. The time stepping is done by using
the SPP RK(3,3) algorithm.  The two-dimensional triangular mesh that
is used for this test is shown on the top panel of
Figure~\ref{Fig:Sod_Leblanc_mesh}. The mesh is composed of $2400$
triangles and $1311$ vertices. The stopping time and number of time
sub-steps performed are $t=0.4$ and $1170$ for the Leblanc test case,
respectively, and $t=0.2$ and $1101$ for the Sod test case,
respectively.  For each time sub-step we use the code whose source is
given in \S\ref{Sec:source_code} to estimate the maximum wave speed.
This code is an exact transcription in Fortran~95 of
Algorithms~\ref{initialization_algorithm} and \ref{update_algorithm}.
The code is called two times for every edge in the mesh at every Runge
Kutta substep. For each sub-step we record the number of times the
maximum wave speed code is called, say $N_{\text{tot}}$, and the
number of times the algorithm enters the iterative loop, say
$N_{\text{iter}}$, \ie the number of times the algorithm goes beyond
the exit test in line~\ref{Exit_alg1} in
Algorithm~\ref{initialization_algorithm} times the number of
iterations in the iterative loop.  We call overhead the ratio
$N_{\text{iter}}/N_{\text{tot}}$; note that this number is greater
than or equal to $1$ if one never exits at line~\ref{Exit_alg1} in
Algorithm~\ref{initialization_algorithm}.  We show in the left and
right panels of Figure~\ref{Fig:Sod_Leblanc_mesh} the overhead as a
function of the time index for various values of the tolerance
$\epsilon$ (denoted ``tol'' in the figure keys). We have observed that
the overhead is $0$ for the Sod test for $\epsilon\ge 6\CROSS 10^{-3}$
(data not shown here). In both the Sod and Leblanc cases we observe
that the overhead decreases rapidly after a few time sub-steps.  The
overhead is at most $1\%$ for the Sod test at the beginning and is
less that $0.1\%$ after 100 time sub-steps; the overhead does not to
depend a lot on $\epsilon$ when $\epsilon\le 5\CROSS 10^{-4}$. For the
Leblanc test, the overhead is at most $10\%$ at the beginning and is
less than $1\%$ after 10 substeps. There seems to be a weak dependency
on $\epsilon$ in this test, but the overhead can be kept well below
$1\%$ for $\epsilon \le 10^{-4}$.  

Since for any practical purpose it is enough to take $\epsilon \in
[10^{-3},5\CROSS 10^{-2}]$ to obtain an accurate estimate of the
maximum wave speed in the algorithms described in
\citep{Guermond_Popov_Hyp_2015,Guermond_Popov_Saavedra_Yong_ALE_2016},
the above tests show that the overhead of the proposed method is
virtually less than $1\%$.

\begin{figure}\centering
\includegraphics[width=0.8\textwidth,viewport=83 345 502 416,clip=true]{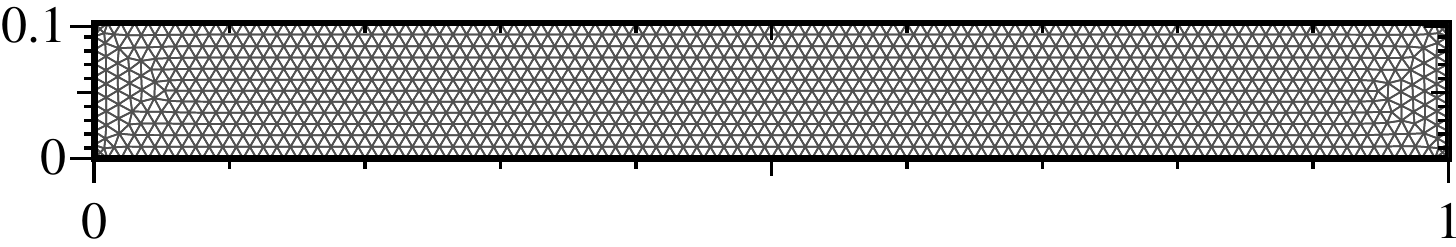}\\
\includegraphics[width=0.4\textwidth,viewport=20 10 352 245,clip=true]{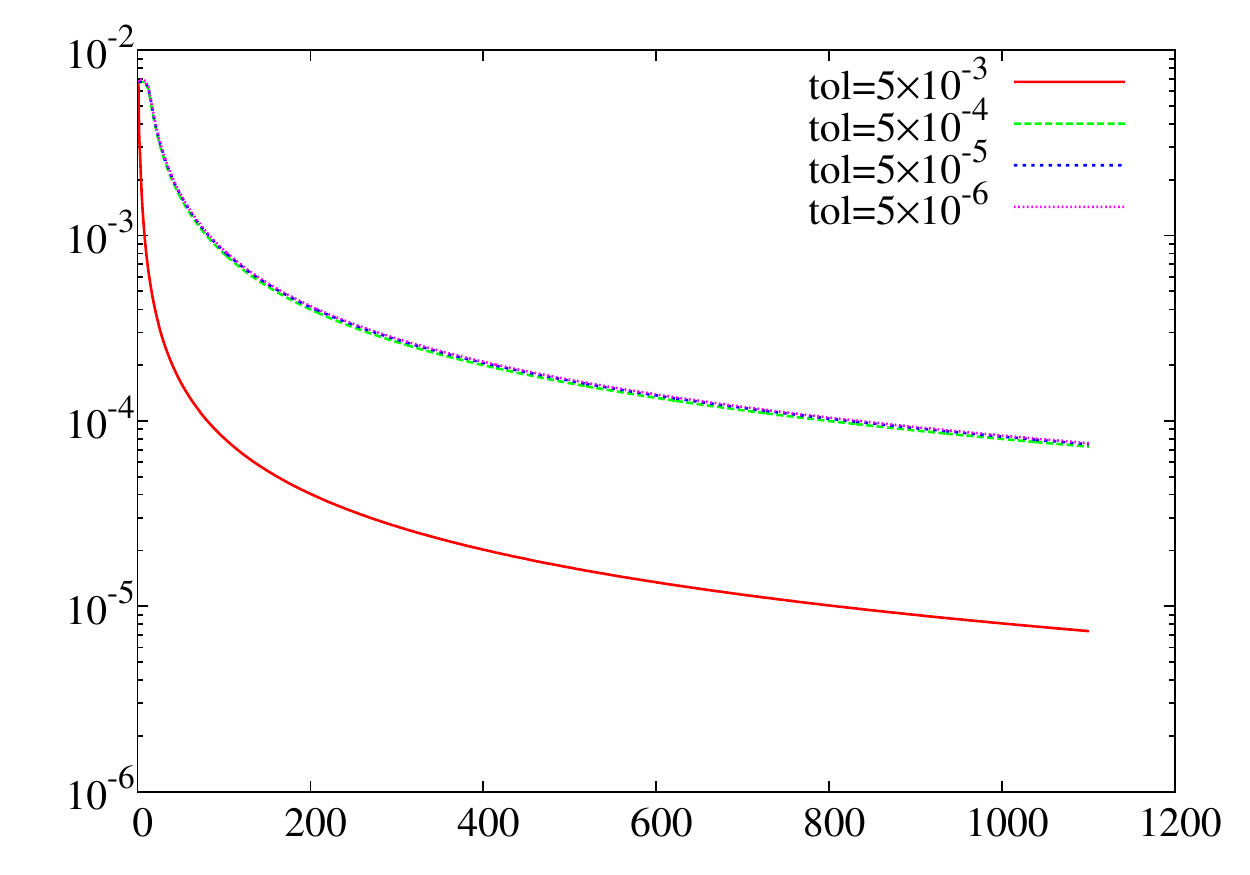}%
\includegraphics[width=0.4\textwidth,viewport=20 10 352 245,clip=true]{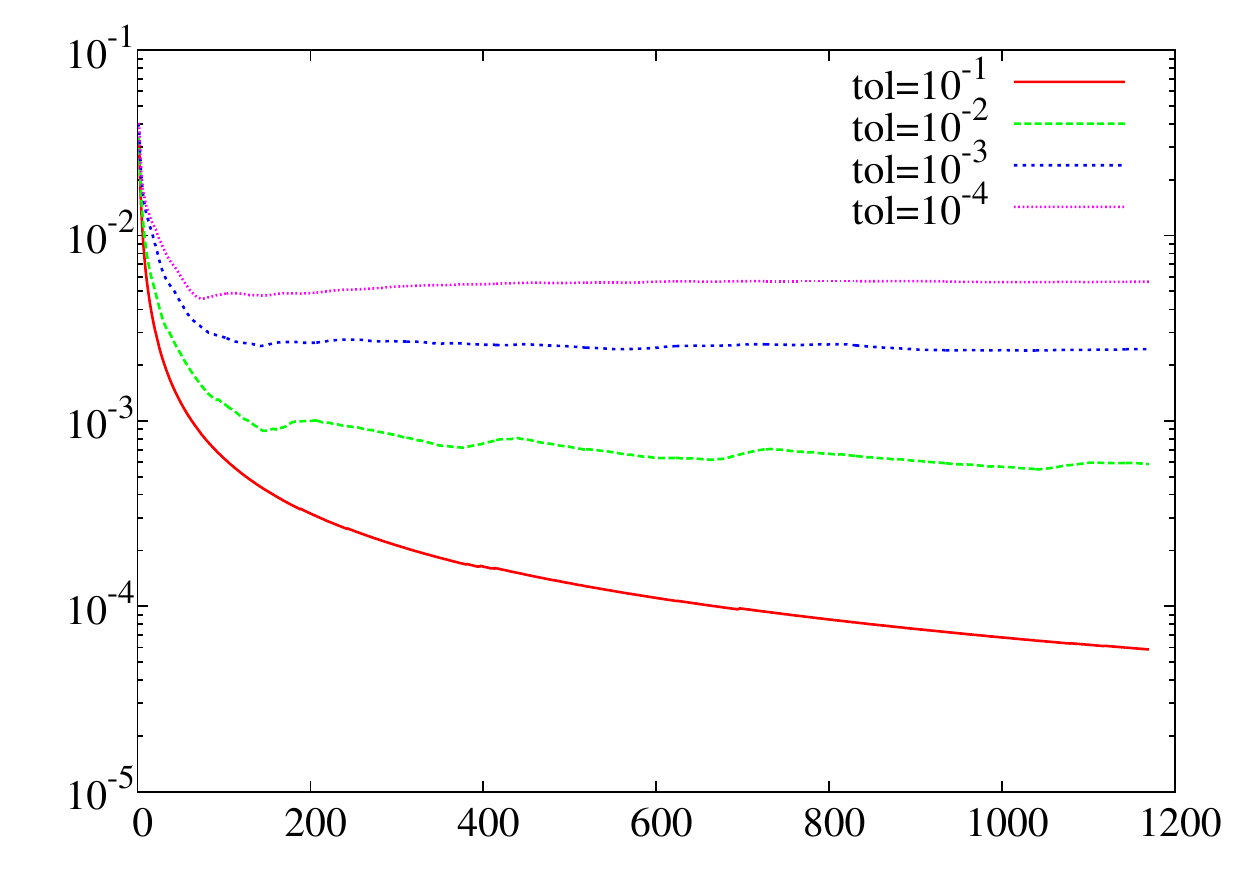}%
\caption{Overhead $N_{\text{\em iter}}/N_{\text{\em tot}}$ vs. time iterations for 
the Sod shocktube (left panel) and the Leblanc shocktube
  (right panel) on the Delaunay mesh shown on the top panel.}
\label{Fig:Sod_Leblanc_mesh}
\end{figure}

\begin{appendix}
\section{Co-volume equation of state} \label{Sec:Appendix}
We verify in this appendix the following statement. 
\begin{proposition} \label{Prop:co_volume_admissible}
  If the left and right states in the Riemann problem \eqref{1D_Euler}
  are such that $0<1-b\rho_L,1-b\rho_R < 1$, then the exact solution
  of the Riemann problem satisfies $0< 1-b\rho \le 1$.
\end{proposition}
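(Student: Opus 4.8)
The plan is to show that across every elementary wave in the Riemann fan the admissibility quantity $1-b\rho$ stays strictly positive and bounded above by $1$, given that it holds for the two initial states. Since the Riemann solution is composed of at most three waves --- two genuinely nonlinear waves (1-wave and 3-wave) and one linearly degenerate contact --- and the density is constant across the contact discontinuity, it suffices to control $\rho$ across a single rarefaction or shock and then argue by monotonicity along the fan. I would treat the two wave types separately and exploit that across the contact wave $\rho$ only jumps while $p$ and $u$ are continuous, so the extreme densities can only be attained in the star states adjacent to a genuinely nonlinear wave.

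First I would handle the rarefaction case. Along a rarefaction the specific entropy $s$ is constant, so the co-volume isentropic relation gives $p(1-b\rho)^{\gamma}\rho^{-\gamma}$ (or the appropriate co-volume isentrope) constant, from which one reads off that $\rho$ decreases monotonically from the foot state toward the star state when $p$ decreases. Hence in a rarefaction $\rho$ never exceeds the larger of its endpoint values, so $1-b\rho$ stays bounded below by $\min$ over the endpoints, which is positive by hypothesis; the upper bound $1-b\rho\le 1$ is immediate since $\rho>0$ and $b\ge 0$. The key point is that a rarefaction can only \emph{decrease} density relative to its high-pressure side, so it cannot violate the lower bound.

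The main obstacle is the shock case, where entropy is not conserved and one must use the co-volume Rankine--Hugoniot (Hugoniot) relation to bound the post-shock density $\rho^*$ in terms of the pre-shock density $\rho^0$ and the pressure ratio. For an ideal gas the density ratio across a shock saturates at the finite limit $\frac{\gamma+1}{\gamma-1}$ as $p^*/p^0\to\infty$; for the co-volume EOS the analogous Hugoniot relation yields a compression ratio that is bounded in such a way that $1-b\rho^*$ stays positive. Concretely I would derive the co-volume Hugoniot as a relation of the form
\[
\frac{1-b\rho^0}{1-b\rho^*}
=\frac{\frac{\gamma+1}{\gamma-1}+\frac{p^*}{p^0}}{1+\frac{\gamma+1}{\gamma-1}\frac{p^*}{p^0}},
\]
and then show that the right-hand side is strictly positive and bounded below by a positive constant uniformly in $p^*/p^0\ge 1$; this forces $1-b\rho^*>0$ whenever $1-b\rho^0>0$. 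The delicate part will be verifying that this bound is uniform over the full admissible range and that the limiting (strong-shock) compression does not drive $1-b\rho^*$ to zero, which is exactly where the co-volume term $b\rho$ enters nontrivially and where the ideal-gas argument must be modified.

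Finally I would assemble the pieces: the extreme states of the fan are obtained from $\bc_L$ and $\bc_R$ by passing through a 1-wave and a 3-wave (each a rarefaction or a shock), and in every combination the above two bounds give $0<1-b\rho^*\le 1$ for both star states; since density is constant across the contact, all intermediate states in the fan inherit the bound, completing the proof. I would expect the rarefaction and assembly steps to be routine, with essentially all the real work concentrated in the strong-shock density bound.
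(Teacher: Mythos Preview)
Your overall plan coincides with the paper's: split by wave type, use the isentrope to show density is monotone along a rarefaction, use the co-volume Hugoniot to bound $1-b\rho$ across a shock, and finish with the contact. Two concrete issues, one minor and one real.

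\medskip

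\textbf{Minor slip.} You first say ``the density is constant across the contact discontinuity'' and later that ``across the contact wave $\rho$ only jumps while $p$ and $u$ are continuous.'' The second statement is the correct one; the density is \emph{not} constant across the contact. The paper handles this exactly as in your final paragraph: the two star states have already been shown admissible from the 1-wave and 3-wave analyses, so the contact needs no separate density control.

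\medskip

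\textbf{The Hugoniot formula you propose is wrong, and this is where the actual work sits.} Your displayed relation
\[
\frac{1-b\rho^0}{1-b\rho^*}=\frac{\frac{\gamma+1}{\gamma-1}+\frac{p^*}{p^0}}{1+\frac{\gamma+1}{\gamma-1}\frac{p^*}{p^0}}
\]
depends only on the pressure ratio. This cannot be right: at $b=0$ the left side is identically $1$, while the right side is not. The correct co-volume Hugoniot (derived in the paper from the Rankine--Hugoniot relations and the EOS, with $\beta:=p^*/p^0$ and $y_0:=b\rho^0$) reads
\[
1-b\rho^*=\frac{(\gamma-1)\beta+\gamma+1}{(\gamma-1)\beta+\gamma+1+2y_0(\beta-1)}\,(1-b\rho^0),
\]
and the essential feature is the explicit $y_0=b\rho^0$ in the denominator. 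From this one sees immediately that for $\beta\ge 1$ and $0<y_0<1$ the prefactor lies in $(0,1]$, hence $0<1-b\rho^*\le 1-b\rho^0$, which is exactly the admissibility you want (and also the physically expected compression $\rho^*\ge\rho^0$). Your formula, besides missing the $b\rho^0$ dependence, would give the ratio on the wrong side of $1$ and thus the wrong monotonicity. Once you replace your displayed relation by the correct one above, the rest of your outline goes through and matches the paper's argument.
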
 
\begin{proof} 
  We split the proof into five parts where we analyze
  the solution across each wave.

 {(1) Left rarefaction wave.} Assume
  that the left wave is a rarefaction, then the wave speed, $S:=u - a
  = u - \sqrt{\frac{\gamma p}{\rho(1-b\rho)} }$, should increase along
  the wave from left to right. Using that both the specific entropy
  $s=\log(e^{\frac{1}{\gamma-1}} (\frac1\rho -b))$ and the generalized
  left Riemann invariant $u+ \frac{2a}{\gamma-1}(1-b\rho)$ are
  constant across the left rarefaction, we obtain (see Eq. (4.93) and
  Eq. (4.94) in \cite[Chapter 4.7]{Toro_2009}):
\[
S(p)=u_L+\frac{2(1-b\rho_L)}{\gamma-1}a_L-\frac{2(1-b\rho+\frac{\gamma-1}{2})}{(\gamma-1)(1-b\rho)}
a_L(1-b\rho_L)\left(\frac{p}{p_L}\right)^{\frac{\gamma-1}{2\gamma}}
\]
where
$\frac1{\rho}-b=(\frac1{\rho_L}-b)\left(\frac{p_L}{p}\right)^{\frac{1}{\gamma}}$.
Owing to the assumption $\frac1{\rho_L}-b>0$, we conclude that
$\rho=\rho(p)$ is an increasing function of $p$. After some
computation, we also prove that the speed $S(p)$ is a decreasing
functions of $p$.  Therefore the rarefaction wave can be parametrized
by $p$ as a decreasing parameter from $p_L$ to $p^*$, \ie the left
wave is well defined. Hence we have $S(p_L)\le S(p)$ for $p^*< p \le
p_L$, which confirms that we have a rarefaction wave.  Finally, using
that $\rho(p)$ is a decreasing function of $p$, we conclude that
$0<1-b\rho_L<1-b\rho$. Observing also that
$\rho=\rho_L\frac{1-b\rho_L}{1-b\rho}\left(\frac{p}{p_L}\right)^{\frac{1}{\gamma}}$
we conclude that $\rho\in [0,\rho_L]$ since $p^*\ge 0$ and $p\in
[p^*,p_L]$.

{(2) Left shock wave.} Assume that the
left wave is a shock from the left state $(\rho_L,u_L,p_L)$ to the
state $(\rho,u,p)$. The Rankine--Hugoniot condition combined with the equation of state implies that 
(see \eg \cite[Chapter 4.7]{Toro_2009}) 
\[
\rho=
\frac{\rho_L(\frac{p}{p_L}+\frac{\gamma-1}{\gamma+1})}
{\frac{\gamma-1+2b\rho_L}{\gamma+1}\frac{p}{p_L}+\frac{\gamma+1-2b\rho_L}{\gamma+1}}.
\]
For details, we refer to \cite[(4.89) on p.145]{Toro_2009}. We
introduce $y:=b\rho$, $y_L:=b\rho_L$ and $\beta:=\frac{p}{p_L}$ and
the above equality can be re-written as follows:
\[
y=y_L\,\frac{\beta+\frac{\gamma-1}{\gamma+1}}{\frac{\gamma-1}{\gamma+1}\beta+1+\frac{2y_L}{\gamma+1}(\beta-1)},
\]
which is equivalent to 
\[
1- y = \frac{1}{1+\frac{2y_L(\beta-1)}{(\gamma-1)\beta+\gamma+1}} (1-y_L).
\]
Hence, we conclude that provided that $0\le y_L<1$ and $\beta\ge 1$,
then $y_L\le y < 1$. This proves the result in the second case.

{(3) Right rarefaction wave.} The proof
is analogous to the case of the left rarefaction wave.

{(4) Right shock wave.} The proof is
analogous to the case of the left shock wave.

{(5) Contact wave.}  The state on the left of the contact wave is
the right state from the left wave which has already been proved to be
admissible in 1--4. Similarly, the state on the right of the contact
wave is the left state from the right wave which has also been proved
to be admissible in 1--4. This completes the proof.
\end{proof}

\section{A counter-example} \label{Sec:counter_example}
We show in this section that taking $\max(|u_L|+a_L,|u_R|+a_R)$ as an
estimate of the maximum wave speed in the Riemann problem, as it is
frequently done in the literature, can actually underestimate the
actual maximum wave speed.

For instance take $u_L=u_R=0$ and select $p_L$ and $p_R$ so that
$p_L/p_R$ is a number less than $1$. Then
$a_R = a_L \sqrt{\frac{p_R}{p_L}} \sqrt{\frac{\rho_L}{\rho_R}}$. Now
we choose $\frac{\rho_L}{\rho_R}$ so that
$\sqrt{\frac{p_R}{p_L}} \sqrt{\frac{\rho_L}{\rho_R}}<1$; note that we
can make this number as small as we want. Then
$\max(|u_L|+a_L,|u_R|+a_R) = a_L$.  But $u_L=u_R=0$ and $p_L< p_R$
implies that $\phi(p_L)<0$ and $\phi(p_R)>0$; whence $p_L< p^*<p_R$.
Therefore the maximum wave speed is the absolute value of the left
speed given in \eqref{estimate_speed_1_Euler},
$a_L(1+\frac{\gamma+1}{2\gamma}(\frac{p^*-p_L}{p_L}))^{\frac12}$,
which is strictly larger than $a_L$, whence the conclusion.

To illustrate the above argument we now give two examples.  First we
consider case 2 from \S\ref{Sec:fast_shock}. The 1-wave and the 3-wave
are both shocks. The correct wave speed is
$\lambda_{\max}\approx 12.25$ but the traditional estimate
gives $\max(|u_L|+a_L,|u_R|+a_R) \approx 29.97$, which is
clearly an overestimate of $\lambda_{\max}$; the ratio is
approximately $0.41$. Second we consider the following two states
\[
\rho_L=0.01,\ \rho_R= 1000, \quad u_L=0, \ u_R= 0, \quad p_L= 0.01, \ p_R = 1000.
\]
We obtain $\lambda_{\max}\approx 5.227$ and $\max(|u_L|+a_L,|u_R|+a_R)
\approx 1.183$. It is clear that the heuristic estimate is far from
the real value; the ratio is approximately $4.4$.  In conclusion the
estimate $\max(|u_L|+a_L,|u_R|+a_R)$ is grossly unreliable.

\section{Source code} \label{Sec:source_code}
{\tiny
\lstinputlisting{\FCODES/riemann_covolume.f90}
}
\end{appendix}

\bibliographystyle{abbrvnat} \bibliography{ref}

\end{document}